\newtheorem{theorem}{Theorem}[section]
\newtheorem{lemma}[theorem]{Lemma}
\newtheorem{proposition}[theorem]{Proposition}
\newtheorem{corollary}[theorem]{Corollary}
\newtheorem{definition}[theorem]{Definition}
\newenvironment{proof}{{\flushleft \emph{Proof}:}}{\hfill\ding{110}}
\newenvironment{comment}{{\flushleft \fontfamily{pzc}\bfseries\large Comment:}}{}
\newenvironment{remark}{{\flushleft \fontfamily{pzc}\bfseries\large Remark:}}{}
\newenvironment{examples}{{\flushleft {\fontfamily{pzc}\bfseries\large Examples}:}}{\hfill $\blacktriangle\blacktriangle\blacktriangle$}
\newcommand{\brk}[1]{\left(#1\right)}          
\newcommand{\Cases}[1]{\begin{cases} #1 \end{cases}}
\newcommand{\mymat}[1]{\begin{pmatrix} #1 \end{pmatrix}}
\newcommand{\Abs}[1]{\left| #1 \right|}        
\newcommand{\deriv}[2]{\frac{d#1}{d#2}}
\newcommand{\secref}[1]{Section~\ref{#1}}
\newcommand{\figref}[1]{Figure~\ref{#1}}
\newcommand{\thmref}[1]{Theorem~\ref{#1}}
\newcommand{\defref}[1]{Definition~\ref{#1}}
\newcommand{\propref}[1]{Proposition~\ref{#1}}
\newcommand{\corrref}[1]{Corollary~\ref{#1}}
\newcommand{\beq}{\begin{equation}}
\newcommand{\eeq}{\end{equation}}
\providecommand{\half}{\frac{1}{2}}
\providecommand{\R}{\mathbb{R}}
\newcommand{\Textand}{\qquad\text{ and }\qquad}
\newcommand{\g}{\mathfrak{g}}
\newcommand{\euc}{\mathfrak{e}}
\newcommand{\Vol}{d\text{Vol}}
\newcommand{\Volume}{d\text{Vol}_\g}
\newcommand{\textVol}{\text{Vol}}
\newcommand{\cof}[1]{\vartheta^#1}
\newcommand{\M}{{\mathcal{M}}}
\newcommand{\D}{{\mathcal{D}}}
\newcommand{\N}{\mathcal{N}}
\newcommand{\id}{{\text{Id}}}
\newcommand{\vp}{\varphi}
\newcommand{\dist}{\operatorname{dist}}
\newcommand{\diam}{\operatorname{Diam}}
\newcommand{\Ball}{\scrB}
\newcommand{\SO}[1]{\text{SO}(#1)}
\newcommand{\ep}{\epsilon}
\newcommand{\Llam}{\mathfrak{L}_\Lambda}
\newcommand{\LlamK}{\mathfrak{L}_{\Kmax,\Lambda}}
\newcommand{\Oof}[1]{O\brk{\frac{1}{n^{#1}}}}
\newcommand{\Wz}{Weitzenb\"ock}
\newcommand{\Kmin}{\underaccent{\bar}{\mathcal{K}}}
\newcommand{\Kmax}{\bar{\mathcal{K}}}
\newcommand{\Lmin}{{\underaccent{\bar}{L}}}
\newcommand{\Lmax}{\bar{L}}
\newcommand{\kmax}{\bar{k}}
\newcommand{\limn}{\lim_{n\to\infty}}
\newcommand{\tM}{\tilde{\M}}
\newcommand{\e}{\varepsilon}
\newcommand{\pl}{\partial}
\renewcommand{\cof}{\vartheta}
\renewcommand{\Ball}{\mathcal{B}}
\newcommand{\ga}{\gamma}
\newcommand{\dga}{\dot{\gamma}}
\newcommand{\sangle}{\sphericalangle}
\newcommand{\dis}{\operatorname{dis}}
\newcommand{\revised}[1]{{\color{red} #1}}
\renewcommand{\revised}[1]{}
\numberwithin{equation}{section}
\numberwithin{figure}{section}
\begin{document}

\title{
Riemannian surfaces with torsion as homogenization limits of locally-Euclidean surfaces with dislocation-type singularities
}
\author{Raz Kupferman and Cy Maor \\
\\
Institute of Mathematics \\
The Hebrew University \\
Jerusalem 91904, Israel}
\date{\today}
\maketitle

\begin{abstract}
We reconcile between two classical models of edge-dislocations in solids.
The first model, dating from the early 1900s models isolated edge-dislocations as line singularities in locally-Euclidean manifolds. The second model, dating from the 1950s, models continuously-distributed edge-dislocations as smooth manifolds endowed with non-symmetric affine connections (equivalently, endowed with torsion fields). In both models, the solid is modeled as a \Wz\ manifold.
We prove, using a weak notion of convergence \cite{KM15}, 
 that the second model can be obtained rigorously as a homogenization limit of the first model, as the density of singular edge-dislocation tends to infinity.   
\end{abstract}


\section{Introduction \revised{and main results}}

\Wz\ manifolds are Riemannian manifolds endowed with a flat (non-necessarily symmetric) metric connection. 
They have been used in general relativity theory in the context of teleparallelism, and in material science in the context of continuous distributions of dislocations (see e.g. Bilby et al. \cite{BBS55} \revised{and Kr\"oner \cite{Kro81}}). 

In the theory of dislocations, the material body is modeled as a smooth manifold endowed with a Riemannian metric that represents intrinsic distance between neighboring material elements. Material defects (e.g. dislocations, disclinations and point defects) are viewed as singularities in the manifold. Edge-dislocations can be modeled as curvature dipoles: a pair of cone singularities of equal magnitudes and opposite signs. 
Even if a neighborhood of the line connecting the pair of singular points (the dislocation line) is removed, thus leaving a smooth locally-flat manifold, the resulting manifold retains the same defect and cannot be isometrically embedded in the Euclidean plane
(more precisely, the defect remains in the sense that the non-trivial monodromy of the manifold does not change, see \cite{KMS14} for details).
Therefore, mathematically speaking, we can always remove a neighborhood of the dislocation line, resulting in a smooth manifold with boundary and non-trivial topology.

Real materials often contain a large number of distributed defects. In such case, one would like to smear out the singularities (or the holes) and represent the dislocations by a smooth field. Such a representation has been in use since the 1950s, with the density of dislocations  represented by the torsion field of a \Wz\ manifold. 

The smearing out of discrete entities (known as homogenization) is a central theme in the mathematics of material science. In the context of distributed dislocations, an interesting question is how a smooth Riemannian manifold endowed with a non-symmetric connection emerges as a limit of manifolds with singularities or holes, whose only intrinsic connection is the (symmetric) Riemannian connection. On  the one hand, we seek a notion of limit that involves a smooth structure, and therefore, is beyond the scope of  convergence of metric spaces (e.g., Gromov-Hausdorff convergence; see \secref{sec:metric} for details). On the other hand, standard notions of convergence for Riemannian manifolds, such as H\"older convergence,  require the sequence of converging manifolds to be of the same diffeomorphism class as the limit. This is clearly not the case for manifolds with singularities or holes converging to a smooth simply-connected manifold.

In \cite{KM15} we defined a new weak notion of convergence for \Wz\ manifolds that encompasses the presence of edge-dislocations:

\begin{definition}
\label{def:convergence}
Let $(\M_n,\g_n,\nabla_n)$, $(\M,\g,\nabla)$ be compact $d$-dimensional \Wz\ manifolds with corners.
We say that the sequence $(\M_n,\g_n,\nabla_n)$ converges to $(\M,\g,\nabla)$ 
with $p\in\revised{[}d, \infty)$, if there exists a sequence of embeddings $F_n: \M_n\to \M$ such that:
\begin{enumerate}
\item $F_n$ is asymptotically surjective:
\[
\limn \textVol_\g (\M\setminus F_n(\M_n)) = 0.
\]
\item $F_n$ are approximate isometries: the distortion vanishes asymptotically, namely,
\[
\limn\dis F_n = 0.
\]
\item $F_n$ are asymptotically rigid in the mean:
\[
\limn \int_{F_n(\M_n)} \dist{^p}(dF^{-1}_n,\SO{\g,\g_n}) \,\Volume = 0,
\]
where $\SO{\g,\g_n}$ denotes the set of metric- and orientation-preserving linear maps $T\M|_{F_n(\M_n)}\to (F_n^{-1})^*T\M_n$\revised{, and $\Volume$ denotes the volume form induced by the metric $\g$}.
\item The parallel transport converges in the mean in the following sense: every point in $\M$ has a neighborhood $U\subset\M$, with (i) a $\nabla$-parallel frame field $E$ on $U$, and (ii) a sequence of $\nabla_n$-parallel frame fields $E_n$  on $F_n^{-1}(U)$,
such that
\[
\limn \int_{U\cap F_n(\M_n)} |(F_n)_\star E_n-E|^p_\g \,\Volume = 0.
\]
\end{enumerate}
\end{definition}

In this definition, the fact that the $\M_n$'s are not diffeomorphic to the limit $\M$ (the mappings $F_n$ are only asymptotically surjective), allows for the presence of holes. Items~2 and 3 define a weak notion of convergence of Riemannian manifolds, slightly stronger than Gromov-Hausdorff (GH) convergence. Item~4 defines the convergence of the connection. This convergence is weak in the sense that it applies to the parallel transport but not to the connection as a derivation (i.e., the Christoffel symbols may not converge).

In \cite{KM15} we showed that this sense of convergence may indeed give rise to torsion as a limit of defects. We constructed a particular sequence of manifolds with edge-dislocations (and no torsion) that converges to a smooth \Wz\ manifold with non-zero torsion. 

A natural question is whether \Wz\ manifolds can be constructed generically as limits of smooth Riemannian manifolds with torsion-free connections. In the material science context, this question amounts to whether any body that falls within the 1950's model of continuously distributed dislocation is a limit of bodies with finitely many dislocations.
In this paper we show that this is the case for essentially any compact, oriented two-dimensional \Wz\ manifold.
Our main theorem is:

\begin{theorem}
\label{th:main}
Let $(\N,\g,\nabla)$ be a compact, oriented two-dimensional \Wz\ manifold  
with corners, with a Lipschitz-continuous boundary. The connection $\nabla$ is, by definition, flat and metrically-consistent with the  metric $\g$.
Then, there exists a sequence of compact locally-Euclidean Riemannian manifolds $(\M_n,\g_n)$ with a trivial holonomy, such that 
$(\M_n,\g_n, \nabla_n)$ converges to $(\N,\g,\nabla)$ in the sense of \defref{def:convergence} for every $p\in\revised{[}2,\infty)$, where $\nabla_n$ is the Levi-Civita connection of $(\M_n,\g_n)$.
\end{theorem}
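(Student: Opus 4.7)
The plan is to reduce the global statement to a local construction by exploiting the flatness of $\nabla$, realize the local torsion as a fine array of edge-dislocations in the spirit of \cite{KM15}, and assemble the pieces into a single compact locally-Euclidean surface.

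First, since $\nabla$ is flat and metric-compatible, every point of $\N$ has a neighborhood admitting a $\nabla$-parallel orthonormal coframe $\vartheta=(\vartheta^1,\vartheta^2)$. I would cover $\N$ by a finite atlas $\{U_\alpha\}$ of such patches, with parallel orthonormal coframes $\vartheta_\alpha$, and observe that the torsion is captured locally by the two-forms $d\vartheta_\alpha^a$, while the chart transitions are constant rigid motions of $\R^2$ (because the $\vartheta_\alpha$ are parallel). On each $U_\alpha$ I would tessellate by cells of diameter $\sim 1/n$ and place at the center of each cell $C$ an edge-dislocation dipole whose Burgers vector equals $\int_C d\vartheta_\alpha$. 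Each such dipole is built as in \cite{KM15}, from a pair of cone singularities of equal and opposite conical defects, a neighborhood of which (of area $o(1/n^2)$) is excised; the surrounding geometry is flat. The cell models glue across cell walls by Euclidean isometries, and across overlaps of $U_\alpha$'s through the constant rigid-motion transitions. This produces a compact locally-Euclidean $(\M_n,\g_n)$, together with a natural embedding $F_n:\M_n\to\N$ that is a local isometry off a neighborhood of the excised holes.

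Second, I would verify the four items of \defref{def:convergence}. The total excised area is $O(n^2)\cdot o(1/n^2)=o(1)$, giving item~1; since $F_n$ is a local isometry away from that small set, items~2 and 3 follow with $dF_n^{-1}\in\SO{\g,\g_n}$ outside the holes. The essential point is item~4: the holonomy of $\nabla_n$ along any loop in $\M_n$ enclosing a subregion $S$ is the product of the inserted Burgers translations inside $S$, which is by construction a Riemann sum for $\int_S d\vartheta_\alpha$. Thus a $\nabla_n$-parallel frame transported cell-by-cell from a base point differs from the $\nabla$-parallel frame on $U_\alpha$ by the discrepancy of this Riemann sum, and I would estimate that discrepancy in $L^p_\g$ using the uniform continuity of $d\vartheta_\alpha$ and a discrete integration-by-parts argument along the lines of the one carried out in \cite{KM15}.

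The main obstacle is precisely the quantitative tuning in item~4, globalized across overlapping charts and near the Lipschitz boundary and corners of $\N$. The Burgers amplitudes must be $\Oof{2}$, and each excised hole must have area $o(1/n^2)$, so that $n^2$ dislocations generate a finite and correctly distributed torsion in the limit while the removed volume still vanishes. A secondary technical issue is that, unlike the constant-torsion example of \cite{KM15}, here $d\vartheta_\alpha$ varies across $\N$, so the individual dipole geometries must vary smoothly from cell to cell and consistently across chart overlaps; however, the pointwise estimates from \cite{KM15} should carry over essentially verbatim, giving uniform control of the accumulated error.
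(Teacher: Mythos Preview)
Your proposal has a genuine gap: you assert that the embedding $F_n$ is ``a local isometry off a neighborhood of the excised holes'', and base items~2 and~3 on this. But $(\N,\g)$ has, in general, nonzero Riemannian curvature $K^\g$ (the torus example in the introduction, for instance), whereas $(\M_n,\g_n)$ is locally Euclidean. No local isometry can exist between them, even away from the holes. The approximating maps must be only \emph{approximately} rigid, with $\dist(dF_n,\SO{\g_n,\g})=O(1/n)$, and this has to be engineered.

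More fundamentally, your construction treats the problem as one of reproducing torsion alone (``place a dipole with Burgers vector $\int_C d\vartheta_\alpha$''), but it does not explain how the flat cells, once glued, approximate the curved metric $\g$. If you tessellate $\N$ by generic cells of size $1/n$ and replace each by a Euclidean polygon with the same edge lengths, the angle sums will be wrong by $O(K^\g/n^2)$ per cell, and the pieces will not fit together around vertices without introducing disclinations---i.e., nontrivial holonomy, contradicting the hypothesis on $\M_n$.

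The paper's resolution hinges on a point you do not use: one triangulates $\N$ by $\nabla$-\emph{geodesic} triangles. By the Gauss--Bonnet theorem for the flat connection $\nabla$ (Appendix~B), each such triangle has interior angles summing \emph{exactly} to $\pi$. Hence it can be replaced by a Euclidean triangle with the same three edge lengths \emph{and} the same three angles, up to a residual incompatibility absorbed by a single edge-dislocation of magnitude $O(1/n^2)$ (\propref{prop:edge_dis}). This is precisely what allows the flat pieces to tile with no angular defect at the vertices, so that the holonomy of $\nabla_n$ is trivial while the metric is still approximated. The relation $K^\g\,\Volume = d\star V^\flat$ (\propref{prop:curlfree}) is what makes this consistent: the Riemannian curvature is entirely accounted for by the torsion data, and the $\nabla$-geodesic triangulation is the device that exploits this.

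A secondary issue: in your discussion of item~4 you write that ``the holonomy of $\nabla_n$ along any loop \ldots\ is the product of the inserted Burgers translations''. This conflates rotational holonomy (which is trivial here, and is what ``trivial holonomy'' in the theorem refers to) with the translational monodromy of the developing map. The convergence of parallel frames in item~4 is about the former, and in the paper it is established (\propref{prop:connection}) by matching the $\nabla_n$-parallel frame to the $\nabla$-parallel frame along the geodesic skeleton, not via Burgers-vector Riemann sums.
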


The fact that the manifolds $(\M_n,\g_n)$ have trivial holonomy implies that the parallel transport of $\nabla_n$ is path-independent. Less formally, it implies that there is no curvature hidden in the ``holes" of the manifold (such as the curvature ``charge" on the tip of a cone). The sequence itself will be constructed using edge-dislocations, i.e. pairs of cone singularities of equal magnitudes and opposite signs, so the total curvature in every defect is indeed zero.

\begin{examples} 
\Wz\ manifolds that can be obtained as a limit as described in \thmref{th:main} include the following cases:
\begin{enumerate}
\item Let $D\subset \R^2$ be an open subset of the Euclidean plane, with a smooth boundary, and let $(e_1,e_2)$ be an orthonormal frame field on $\bar{D}$. By declaring this frame field parallel we obtain a flat connection $\nabla$ which is metrically-consistent with the Euclidean metric $\euc$ on $\bar{D}$, and the triplet $(\bar{D},\euc,\nabla)$ is a \Wz\ manifold satisfying the conditions of \thmref{th:main}. The example in \cite{KM15} is of this type, with $D$ a sector of an annulus endowed with the connection obtained by declaring the orthonormal frame-field $(\pl r, r^{-1}\pl \theta)$ parallel, where $r$ and $\theta$ are polar coordinates
(\figref{fig:example_sector}). 

\item Let $\mathbb{T}^2\subset \R^3$ be the two-dimensional torus embedded in $\R^3$, with the metric $\g$ induced by this embedding. From the standard representation of $\mathbb{T}$ as $\R^2/\mathbb{Z}^2$ we obtain an orthogonal frame field $(\pl x, \pl y)$. Normalizing it and declaring it parallel, we obtain a non-symmetric metrically-consistent connection $\nabla$. This is equivalent to declaring the directions of the meridians and parallels on the torus as parallel
(\figref{fig:example_torus}). 
By \thmref{th:main}, the \Wz\ manifold $(\mathbb{T},\g,\nabla)$ can be obtained as a limit of locally Euclidean manifolds with no curvature ``charges". This is somewhat surprising, as the non-zero gaussian curvature of $(\mathbb{T}^2,\g)$ seems to come out of nowhere. However, we will see that the fact that the total gaussian curvature of $(\mathbb{T}^2,\g)$ is zero plays here a crucial role. 
\end{enumerate} 
\end{examples}

\begin{figure}
\begin{center}
\begin{subfigure}[b]{0.45\textwidth}
	\includegraphics[height=1.9in]{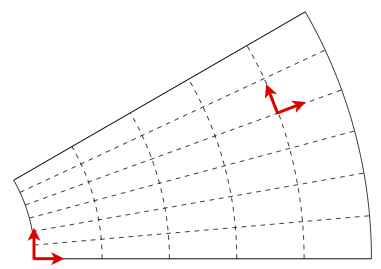}
	\caption{}
	\label{fig:example_sector}
\end{subfigure}
\quad\qquad
\begin{subfigure}[b]{0.45\textwidth}
	\includegraphics[height=1.9in]{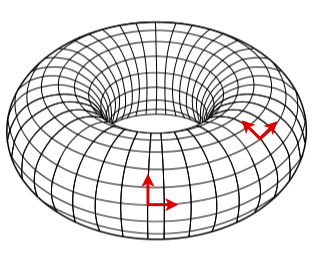}
	\caption{}
	\label{fig:example_torus}
\end{subfigure}
\end{center}
\caption{Examples of \Wz\ manifolds for which \thmref{th:main} applies. The red arrows indicate a  frame field that is parallel with respect to the metrically-consistent, non-symmetric connection on each manifold.}
\label{fig:examples}
\end{figure}

The structure of this paper is as follows: 
In \secref{sec:geo} we present properties of geodesic curves in \Wz\ manifolds. These geodesics play an important part in our construction. Since geodesics (of a general affine connection) are generally not locally length minimizing, the classical treatment to geodesics in the Riemannian setting needs to be extended.
In \secref{sec:Mn} we construct the sequence $\M_n$ of manifolds with edge-dislocations, and
in \secref{sec:mapping} we construct the embeddings $F_n:\M_n\to \N$ and establish some of their properties.
In \secref{sec:metric} we show that the distortion of $F_n$ vanishes asymptotically, which together with asymptotic surjectivity implies in particular that $\M_n$ GH-converges to $\N$.
Finally, in \secref{sec:connection} we complete the proof of \thmref{th:main} by showing the convergence of the connections.

\section{Geodesics of flat metric connections on the Euclidean plane}
\label{sec:geo}

In this section we describe properties of geodesics of flat connections. The main results  concern the existence of geodesic triangles and their properties (\corrref{cor:triangle}).

Given a general Riemannian manifold $(\N,\g)$, every metrically-consistent connection is defined by the torsion tensor $T$ (via a generalization of the Koszul formula \cite[p. 26]{Pet06}). For two-dimensional manifolds, the connection can  be defined equivalently by a vector field $V$ on $\N$ via the formula, 
\beq
\nabla_X Y = \nabla^\g_X Y + \g(X,Y) V - \g(V,Y) X,
\label{eq:V}
\eeq
where $\nabla^\g$ is the Riemannian Levi-Civita connection (see e.g. Agricola and Thier \cite{AT04}).  The torsion is then related to $V$ by
\[
T(X,Y) = \g(V,X) Y - \g(V,Y) X.
\]

The fact that a flat connection is metrically-consistent with a metric does not imply that the metric is flat (a metric is flat if the curvature tensor of the Levi-Civita connection vanishes). The following proposition relates the gaussian curvature $K$ of  $\nabla$ with the gaussian curvature $K^\g$ of $\nabla^\g$:

\begin{proposition}
\label{prop:curlfree}
Let $(\N,\g)$ be a two-dimensional Riemannian manifold. Let $\nabla$ be a metrically-consistent connection defined as in \eqref{eq:V} by a vector field $V$. Let $K$ and $K^\g$ denote the connection and the Riemannian gaussian curvatures. Then,
\[
K \,\revised{\Volume} = K^\g \,\revised{\Volume} - d \star V^\flat,
\] 
where $\star$ is the Hodge dual and $\flat$ denotes the musical isomorphism operator.
In particular, a Riemannian manifold endowed with a metrically-consistent connection $(\N,\g,\nabla)$ is a \Wz\ manifold, if and only if the curl of $V$ (viewed as a scalar) is equal to the gaussian curvature of $(\N,\g)$.
\end{proposition}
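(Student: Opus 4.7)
The plan is to reduce everything to the Cartan structure equations on a local orthonormal frame. First, I would pick (locally, on an open set of $\N$) a positively-oriented orthonormal frame $(e_1,e_2)$ with dual coframe $(\vartheta^1,\vartheta^2)$, so that the volume form is $\vartheta^1\wedge\vartheta^2$ and the Hodge star acts by $\star \vartheta^1 = \vartheta^2$, $\star\vartheta^2 = -\vartheta^1$. On a two-dimensional oriented Riemannian manifold every metric connection is determined by a single connection $1$-form $\omega^1_{\,2} = -\omega^2_{\,1}$, and the gaussian curvature of any metric connection satisfies $d\omega^1_{\,2} = -K\,\Volume$ (this is the structure equation in the form I will use, with the quadratic $\omega\wedge\omega$ term vanishing because $\omega^1_{\,1}=\omega^2_{\,2}=0$). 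The same identity holds with $\omega^{\g}$ and $K^\g$ on the Levi-Civita side.

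Next I would compute the difference of the two connection $1$-forms explicitly from \eqref{eq:V}. Writing $X = X^1 e_1 + X^2 e_2$ and $V = V^1 e_1 + V^2 e_2$ and applying the identity $\nabla_X e_i - \nabla^\g_X e_i = \g(X,e_i)V - \g(V,e_i)X$, one reads off
\[
\bigl(\omega^1_{\,2} - (\omega^\g)^1_{\,2}\bigr)(X) \;=\; \g(X,e_2)\g(V,e_1) - \g(V,e_2)\g(X,e_1) \;=\; V^1 X^2 - V^2 X^1.
\]
Since $V^\flat = V^1\vartheta^1 + V^2\vartheta^2$ and $\star V^\flat = V^1\vartheta^2 - V^2\vartheta^1$, the right-hand side is exactly $(\star V^\flat)(X)$. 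Hence $\omega^1_{\,2} - (\omega^\g)^1_{\,2} = \star V^\flat$ as $1$-forms.

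Applying $d$ to this identity and using the two structure equations from the first step gives
\[
-K\,\Volume + K^\g\,\Volume \;=\; d\omega^1_{\,2} - d(\omega^\g)^1_{\,2} \;=\; d\star V^\flat,
\]
which rearranges to $K\,\Volume = K^\g\,\Volume - d\star V^\flat$, as required. Although the computation is carried out in a local frame, the resulting identity is between globally defined $2$-forms, so it holds on all of $\N$. For the final assertion, recall that $(\N,\g,\nabla)$ is a \Wz\ manifold precisely when $\nabla$ is flat, i.e.\ $K\equiv 0$; substituting into the identity, this is equivalent to $d\star V^\flat = K^\g\,\Volume$, which is exactly the statement that the curl of $V$ (which in two dimensions is the scalar $\star d\star V^\flat$, up to sign conventions) equals $K^\g$.

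The only subtle step is the sign/normalization check: making the structure equation $d\omega^1_{\,2} = -K\,\Volume$ consistent with the definition of $K$ as a sectional curvature of a general (not necessarily Levi-Civita) metric connection, and likewise matching the Hodge-star convention so that $\star V^\flat$ appears with the correct sign. I would simply verify this once on a trivial example (say $V=0$, recovering the classical Gauss equation) to pin down the signs; otherwise the argument is a direct two-line manipulation of $1$-forms.
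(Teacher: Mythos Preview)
Your proof is correct, and it takes a genuinely different route from the paper's own argument in Appendix~\ref{sec:curlfree}. The paper expands the full curvature tensor $R(X,Y)Z$ of $\nabla$ directly from \eqref{eq:V}, obtaining a long expression in $R^\g$, $\nabla^\g V$, and algebraic terms in $V$; it then contracts against an orthonormal frame to get $K = K^\g - \g(\nabla^\g_{e_1}V,e_1) - \g(\nabla^\g_{e_2}V,e_2)$, and finally verifies by a separate coframe computation (using Cartan's first structure equation) that this divergence term coincides with $d\star V^\flat(e_1,e_2)$. Your approach instead stays entirely at the level of connection $1$-forms: you show in one line that $\omega^1_{\,2}-(\omega^\g)^1_{\,2}=\star V^\flat$, and then the second structure equation does all the work. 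This is shorter and more conceptual; in particular it bypasses the ``tedious but straightforward'' expansion of $R(X,Y)Z$ and the later identification of the divergence with $d\star V^\flat$. What the paper's computation buys, by contrast, is the full expression for $R(X,Y)Z$, which could in principle be reused elsewhere (though it is not in this paper). Your caveat about pinning down the sign in $d\omega^1_{\,2}=-K\,\Volume$ is apt; note that the paper's Appendix~\ref{sec:GB} uses the convention $\nabla_X e_1=\omega(X)e_2$ with $d\omega=K\,\Vol$, which is consistent with your sign once one identifies $\omega=\omega^2_{\,1}=-\omega^1_{\,2}$.
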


The proof is given in Appendix~\ref{sec:curlfree}. 
An immediate corollary of \propref{prop:curlfree} and the Gauss-Bonnet theorem is that a closed, oriented 2-manifold can be endowed with a metrically-consistent flat connection (i.e. a structure of a \Wz\ manifold) only if its genus is 1, that is, it is diffeomorphic to the torus. This is because the equation $d \star V^\flat = K^\g \revised{\Volume}$ cannot be solved for $V$ unless $\int_\N K^\g\, \Volume =0$. As mentioned in the introduction, the torus can indeed be endowed with a structure of a \Wz\ manifold. Of course, there  exist other compact manifolds with boundary or corners that can be endowed with such a structure.

Let $(\N,\g)$ be a Riemannian manifold, let $\nabla$ be a metrically-consistent connection, and let $\gamma$ be a curve in $\N$. $\gamma$ may be a geodesic with respect to either $\nabla$ or $\nabla^\g$ (i.e. $\nabla_{\dot{\gamma}}\dot{\gamma}=0$ or $\nabla^\g_{\dot{\gamma}}\dot{\gamma}=0$, respectively). In the former case we call $\gamma$ a \emph{geodesic}, and in the later case a \emph{segment} (with a slight abuse of terminology, we also call segment any length-minimizing curve, even if it hits the boundary). Note that while a segment is locally a length minimizer, a geodesic is not necessarily so. For an arbitrary curve $\ga$ we call  $\kappa = | \nabla^\g_{\dot{\gamma}}\dot{\gamma}|/|\dga|^2$ the \emph{curvature} of $\gamma$ (it is the geodesic curvature with respect to the Levi-Civita connection, but in order to avoid confusion we use the term ``geodesic" only with respect to the connection $\nabla$). Note that $\gamma$ has zero curvature if and only if it is a segment, and in particular a geodesic may have a non-zero curvature.

In \cite{AT04} it is shown that if $\nabla$ is defined by a vector field $V$, the curvature of a geodesic $\ga$ is given by
\beq
\kappa^2 = | V|^2 - \frac{\g(V,\dot{\gamma})^2}{|\dga|^2}.
\label{eq:kappa}
\eeq
In particular,  $\kappa\le |V|$.

Henceforth, the Riemannian manifold $(\N,\g)$ is compact, oriented, with a Lipschitz-continuous boundary, endowed with a flat metrically-consistent connection $\nabla$ defined by a smooth vector field $V$. We denote $\Lambda = \max_{\N} |V|$. 

The following proposition provides a quantitative estimate to the fact that short curves with curvature below a given bound are almost length minimizers. 

\begin{proposition}
\label{prop:geo_length}
Let $(\N,\g)$ be a two-dimensional Riemannian manifold whose gaussian curvature satisfies $K \le \Kmax$. Let $\ga:[0,\ell]\to\N$ be a curve in arclength parametrization with bounded curvature, $\kappa\le \Lambda$. Denote $d = d(\ga(0),\ga(\ell))$.
Then, there exists an $L(\Kmax,\Lambda)>0$, such that if $\ell < L(\Kmax,\Lambda)$, the following holds:
\begin{enumerate}
\item $d(\ga(0),\ga(t))$ is an increasing function of $t$.
\item
\[
d\le \ell \le \LlamK(d),
\]
where 
\[
\LlamK(x) = \Cases{
\displaystyle \frac{2}{\sqrt{\Lambda^2+\Kmax}}\sin^{-1} \brk{\sqrt{1 + \frac{\Lambda^2}{\Kmax}} \, \sin\frac{\sqrt{\Kmax} x}{2}} & \Kmax>0 \\
\\
\displaystyle \frac{2}{\Lambda}\sin^{-1} \brk{\frac{\Lambda x}{2}} & \Kmax= 0 \\
\\
\displaystyle \frac{2}{\sqrt{\Lambda^2-\Kmax}}\sin^{-1} \brk{\sqrt{1 - \frac{\Lambda^2}{\Kmax}} \, \sinh\frac{\sqrt{-\Kmax} x}{2}} & \Kmax< 0.
}
\]
In particular, for small $x$,
\[
\LlamK(x) = x + O(x^3). 
\]
\end{enumerate}
\end{proposition}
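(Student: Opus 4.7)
The inequality $d \le \ell$ is immediate, since $\ga$ itself is a path from $\ga(0)$ to $\ga(\ell)$ of length exactly $\ell$. For the remaining claims my plan is a comparison argument with the two-dimensional model space of constant curvature $\Kmax$; the function $\LlamK$ is precisely the arclength, as a function of chord length, of an arc of constant geodesic curvature $\Lambda$ in that model space. The threshold $L(\Kmax,\Lambda)$ will be chosen small enough that $\ga([0,\ell])$ remains within a geodesic polar chart centered at $p := \ga(0)$ and, simultaneously, the arguments of the inverse (hyperbolic) sines in $\LlamK(d)$ stay below $1$.

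In polar coordinates $(r,\phi)$ at $p$, write $\ga(t)=(r(t),\phi(t))$ and let $\theta(t)$ denote the angle between $\dga(t)$ and $\pl_r|_{\ga(t)}$. By the Gauss lemma, $\dot r = \cos\theta$, so Part 1 is equivalent to showing $\theta(t)\in[0,\pi/2)$ on $(0,\ell]$. A Frenet-type calculation in the orthonormal polar frame expresses $\dot\theta$ as the sum of two contributions: the rotation of $\dga$ induced by its own geodesic curvature, bounded in absolute value by $\kappa\le\Lambda$, and the rotation of the radial frame $\pl_r$ along $\ga$, whose magnitude is controlled by the geodesic curvature of the distance spheres, which in turn is bounded above by the Hessian (Riccati) comparison theorem applied with $K\le\Kmax$. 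Together with $\dot r = \cos\theta$, this yields a first-order system for $(r,\theta)$ that is dominated, componentwise, by the corresponding system for the extremal arc of constant geodesic curvature $\Lambda$ in the model space of curvature $\Kmax$.

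Integrating the model system explicitly gives the closed-form relation $\ell=\LlamK(d)$ for the extremal arc, from which a standard Rauch-type comparison will yield both $\theta(t)<\pi/2$ (Part 1) and $\ell\le\LlamK(d)$ (Part 2); the expansion $\LlamK(x)=x+O(x^3)$ follows by Taylor-expanding the closed form at $x=0$. The main obstacle is the bootstrap step required to keep the comparison valid on all of $[0,\ell]$: the estimate on $\dot\theta$ is only useful while $\theta<\pi/2$, so one must rule out the possibility that $\theta$ reaches $\pi/2$ before the model's extremal curve does. This is precisely the role of the smallness constant $L(\Kmax,\Lambda)$, which I would define to be strictly less than the arclength at which the model extremal circle first attains $\theta=\pi/2$; a continuity argument then upgrades the infinitesimal comparison of $(r,\theta)$ to a global inequality on the full interval, simultaneously establishing monotonicity of $r(t)$ and the quantitative upper bound.
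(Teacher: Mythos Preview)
Your approach is essentially the same as the paper's: both use geodesic polar coordinates centered at $\ga(0)$, invoke the Hessian/Rauch comparison $\vp_r/\vp \ge \psi_r/\psi$ with the constant-curvature model $\psi$, and reduce the problem to an ODE comparison with the arc of constant geodesic curvature $\Lambda$ in that model. The only cosmetic difference is the choice of dependent variable. You work with the angle $\theta$ between $\dga$ and $\pl_r$ and the Frenet-type relation $\dot\theta = \kappa_{\text{signed}} - \sin\theta\,\vp_r/\vp$, whereas the paper writes the second-order equation for $r$ directly and introduces the auxiliary quantity $G=\psi\sqrt{1-\dot r^2}$; since $\dot r=\cos\theta$ these are the same computation, and the paper's inequality $\dot G\le\Lambda\psi\dot r$ is exactly your bound on $\dot\theta$ after the change of variables. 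The paper then integrates twice to obtain the explicit $\LlamK$, while you propose to read off $\LlamK$ as the chord--arclength relation of the model circle; both lead to the same formula and the same threshold $L(\Kmax,\Lambda)$ (the arclength at which the model arc first has $\dot r=0$, equivalently $\theta=\pi/2$).
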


\begin{proof}
Let
\[
L_1(\Kmax,\Lambda) = \Cases{
\frac{2}{k} \tan^{-1} \frac{k}{\Lambda} & \Kmax > 0 \\
\frac{2}{\Lambda} & \Kmax = 0 \\
\frac{2}{k} \tanh^{-1} \frac{k}{\Lambda} & \Kmax < 0,
}
\]
where $k = \sqrt{|\Kmax|}$, and 
\[
L(\Kmax,\Lambda) = 
\min \brk{\half\operatorname{inj}(\N,\g), L_1(\Kmax,\Lambda)},
\]
where $\operatorname{inj}$ denotes the injectivity radius; for $\Kmax>0$, 
$\operatorname{inj}(\N,\g) \le \pi/\sqrt{\Kmax}$.
Consider a semi-geodesic (polar) parametrization $(r,\theta)$ around $\ga(0)$, in which the metric has the following form,
\[
\g(r,\theta) = \mymat{1 & 0 \\ 0 & \vp^2(r,\theta)},
\]
where $r$ is the distance from the origin $\ga(0)$, $\theta\in[0,2\pi)$, and $\vp$ is monotonically increasing in $r$, with initial conditions $\vp(0)=0$ and $\vp_r(0)=1$. 
Since for $\Kmax>0$
\beq
\label{eq:|ga|}
d(\ga(t),\ga(0))  \le L(\Kmax,\Lambda) \le \half\operatorname{inj}(\N,\g),
\eeq
it follows that $\ga$ lies within the domain of this parametrization. 

The gaussian curvature $K(r,\theta)$ is related to the function $\vp(r,\theta)$ by the well-known formula,
\[
K = -\frac{\vp_{rr}}{\vp}.
\]
Define
\[
\psi(r) = \Cases{\frac{1}{k}\,\sin(kr) & \Kmax>0 \\
r & \Kmax=0 \\
\frac{1}{k}\,\sinh(kr) & \Kmax<0.}
\]
The case $\vp=\psi$ corresponds to a surface of constant gaussian curvature $\Kmax$. 
It is easy to see that for all $(r,\theta)$ in the domain of parametrization,
\beq
\frac{\vp_r}{\vp} \ge \frac{\psi_r}{\psi}.
\label{eq:phi_psi}
\eeq

The equations of a curve $\gamma(t) = (r(t),\theta(t))$ whose (signed) curvature is $\kappa(t)$ are
\[
\begin{gathered}
\ddot{r} - \frac{\vp_r}{\vp} (\vp\dot{\theta})^2  = 
-  \kappa\, \vp\dot{\theta} \\
\vp\ddot{\theta}  + \frac{\vp_\theta}{\vp^2}(\vp\dot{\theta})^2 + 2\frac{\vp_r}{\vp}\dot{r}\, \vp\dot{\theta} = \kappa\, \dot{r}.
\end{gathered}
\]
For a curve in arclength parametrization,
\beq
\dot{r}^2 + \vp^2\dot{\theta}^2 = 1.
\label{eq:arclength_speed}
\eeq

Taking the equation for $r$, using the bound \eqref{eq:phi_psi}, the bound $|\kappa|\le\Lambda$ and \eqref{eq:arclength_speed}, we obtain the inequality,
\[
\ddot{r} - \frac{\psi_r}{\psi} (1- \dot{r}^2)  \ge  - \Lambda \sqrt{1-\dot{r}^2}.
\]
Introducing $G  = \psi \sqrt{1-\dot{r}^2}$ it follows that
\[
\dot{G} \le \Lambda \psi \dot{r}.
\]

Setting
\[
\Psi(r) = \int_0^r \psi(s)\, ds = \Cases{
\frac{1}{k^2}(1 - \cos(k r)) & \Kmax > 0 \\
r^2/2 & \Kmax = 0 \\
\frac{1}{k^2}(\cosh(k r)-1) & \Kmax < 0,
}
\]
we get upon a first integration,
\[
\psi \sqrt{1-\dot{r}^2} \le \Lambda \Psi.
\]
Thus,
\[
\dot{r}^2 \ge 1 - \Lambda^2\frac{\Psi^2}{\psi^2}.
\]
In particular, $\dot{r}$ does not change sign as long as $\Lambda\Psi < \psi$, i.e., as long as
\[
r < L_1(\Kmax,\Lambda) ,
\]
which holds since $r<L(\Kmax,\Lambda)\le L_1(\Kmax,\Lambda)$.
This proves Item~1.
Isolating $\dot{r}$ and integrating the resulting inequality one more time we obtain Item~2.

\end{proof}

\begin{remark}
\begin{enumerate}
\item The bound $L(\Kmax,\Lambda)$ is not optimal, but it is sufficient for our construction.
\item In the above proof we did not consider the case in which either $\ga$ or the segment between its endpoints intersect the boundary. While this is immaterial for the rest of the construction, the proof can be slightly modified to include these cases too.
\item The bound in Item~2 can also be obtained by proving that if $\ell<L(\Kmax,\Lambda)$ then $\ga$ is contained in a ball of radius $d/2$ around the midpoint of the segment connecting $\ga(0)$ and $\ga(\ell)$, and then using the main result in \cite{Dek80}.
\end{enumerate}
\end{remark}

\begin{corollary}
\label{cor:dist_geo_seg}
Under the assumptions of \propref{prop:geo_length} and a lower bound $K\ge \Kmin$ on the gaussian curvature, the Hausdorff distance between $\ga$ and the segment $\sigma$ that connects $\ga(0)$ with $\ga(\ell)$ is $O(d^2)=O(\ell^2)$.
\end{corollary}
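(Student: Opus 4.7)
The plan is to work in the semi-geodesic polar coordinates $(r,\theta)$ centered at $\gamma(0)$ that were constructed in the proof of Proposition~\ref{prop:geo_length}, in which the metric reads $\g = dr^2 + \vp^2(r,\theta)\,d\theta^2$. By Item~1 of that proposition, $r(t) = d(\gamma(0),\gamma(t))$ is strictly increasing on $[0,\ell]$, so I may reparametrize $\gamma$ by $r$ and write $\theta = \theta(r)$, with $\theta(d) = \theta_0$, where $\theta_0$ is the polar angle of $\gamma(\ell)$. In these coordinates the segment $\sigma$ is exactly the radial line $\{(s,\theta_0) : 0 \le s \le d\}$, so the Hausdorff-distance question reduces to controlling an angular deviation.

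The first step is a pointwise bound on $|d\theta/dr|$. From the arclength relation $\dot r^2 + \vp^2\dot\theta^2 = 1$ and the inequality $\dot r^2 \ge 1 - \Lambda^2\Psi^2/\psi^2$ extracted from the proof of Proposition~\ref{prop:geo_length}, I obtain
\[
\left|\frac{d\theta}{dr}\right| \;=\; \frac{|\vp\dot\theta|}{\vp\,|\dot r|} \;\le\; \frac{\Lambda\,\Psi(r)/\psi(r)}{\vp(r,\theta)\sqrt{1 - \Lambda^2\Psi(r)^2/\psi(r)^2}}.
\]
Here the newly assumed bound $K \ge \Kmin$ plays its role: combined with $K \le \Kmax$, Rauch-type comparison on the polar frame gives $\psi_{\Kmax}(r) \le \vp(r,\theta) \le \psi_{\Kmin}(r)$, so in particular $\vp(r,\theta) = r + O(r^3)$ uniformly in $\theta$. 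Substituting the expansions $\psi(r) = r + O(r^3)$ and $\Psi(r) = r^2/2 + O(r^4)$ reduces the displayed inequality to $|d\theta/dr| \le \Lambda/2 + O(r^2)$ throughout the chart $r < L(\Kmax,\Lambda)$.

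The second step is to integrate this bound and convert it to a metric distance. Integrating from $r$ up to $d$ and using $\theta(d) = \theta_0$ yields
\[
|\theta(r) - \theta_0| \;\le\; \frac{\Lambda(d-r)}{2} + O(d^3).
\]
The metric distance from $\gamma(r) = (r,\theta(r))$ to the radial point $(r,\theta_0) \in \sigma$ is bounded by the length of the angular arc at radius $r$, namely $\vp(r,\cdot)\,|\theta(r)-\theta_0| \le (r + O(r^3))\cdot\Lambda(d-r)/2$, which is at most $\Lambda d^2/8 + O(d^4)$ uniformly for $r \in [0,d]$. For the reverse direction, given $s \in [0,d]$ the monotonic bijectivity of $r(t)$ produces a unique $t_s$ with $r(t_s) = s$, and the same inequality controls $d(\sigma(s),\gamma(t_s))$. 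Combining the two directions gives $d_H(\gamma,\sigma) = O(d^2)$; since $\ell = d + O(d^3)$ by Proposition~\ref{prop:geo_length}, this is equivalently $O(\ell^2)$.

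The main technical point I expect to have to pin down is the uniform-in-$\theta$ expansion $\vp(r,\theta) = r + O(r^3)$ with constants depending only on $\Kmin$ and $\Kmax$; this is where the two-sided curvature hypothesis is essential, via Rauch comparison, and it needs to be invoked carefully so that the $O(r^2)$ remainder in $|d\theta/dr|$ is genuinely uniform over the entire chart. Everything else is elementary integration and a straightforward use of the bijectivity of $r(t)$.
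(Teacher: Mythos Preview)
Your argument is correct and takes a genuinely different route from the paper's own proof. The paper proceeds extrinsically: it considers the segment triangle with vertices $\ga(0)$, $\ga(t)$, $\ga(1)$, uses Rauch's angle-comparison theorem together with the law of cosines in constant-curvature models to show the angle at $\ga(0)$ is $O(d)$, and then applies Toponogov's hinge comparison to bound $d(\ga(t),\sigma(t))$. Your approach instead stays inside the polar chart already built for \propref{prop:geo_length} and recycles the differential inequality $\dot r^2 \ge 1 - \Lambda^2\Psi^2/\psi^2$ derived there, converting it into a pointwise bound on $|d\theta/dr|$ and then integrating. The two-sided curvature bound enters for you only through the Sturm/Rauch comparison $\psi_{\Kmax}\le\vp\le\psi_{\Kmin}$ for the warping function, which is the technical point you correctly flag as needing care; once that is in place, everything else is indeed elementary calculus. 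Your argument is arguably more self-contained---it avoids invoking Toponogov and the spherical/hyperbolic law of cosines---and it even yields the explicit constant $\Lambda d^2/8$ in the leading term. The paper's approach, by contrast, is more geometric and does not require reopening the proof of \propref{prop:geo_length}.
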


\begin{proof}
Reparametrize $\ga$ and $\sigma$ such that they are defined on the interval $[0,1]$ with constant speed, i.e. $|\dga| = \ell$ and $|\dot{\sigma}| = d$.
By \propref{prop:geo_length}, $\ell = d + O(d^3)$, and for every $t\in [0,1]$,
\[
d(\ga(0),\ga(t)) = t\cdot d + O(d^3) \qquad d(\ga(t),\ga(1)) = (1-t) \cdot d + O(d^3).
\]
Consider the segment triangle with vertices $\ga(0)$, $\ga(t)$ and $\ga(1)$. By Rauch's comparison theorem \cite[p.~215]{Kli95}, the angles $\alpha(0), \alpha(t), \alpha(1)$ of this triangle are smaller than the angles $\alpha_{\Kmax}(0), \alpha_{\Kmax}(t), \alpha_{\Kmax}(1)$ of a segment triangle with same edge lengths in a space of constant curvature $\Kmax$.
Assume $\Kmax >0$ (the other cases are analogous), and denote $\kmax = \sqrt{\Kmax}$. By the law of cosines for a space of constant positive curvature \cite[p.~340]{Pet06}, we obtain
\[
\begin{split}
\cos(\alpha_{\Kmax}(0)) &= 
\frac{\cos(\kmax\, d(\ga(t),\ga(1)))-\cos(\kmax\, d(\ga(0),\ga(1)))\cos(\kmax\, d(\ga(0),\ga(t)))}{	\sin(\kmax\, d(\ga(0),\ga(1)))\sin(\kmax\, d(\ga(0),\ga(t)))} \\
&= \frac{t  + O(d^2)}{ t + O(d^2)}.
\end{split}
\]
And similarly 
\[
\cos(\alpha_{\Kmax}(1)) = \frac{1-t + O(d^2)}{1-t + O(d^2)}.
\]
Either $t$ or $1-t$ are of order $1$. Assume that $t$ is of order $1$ (the other case is analogous). Then, $\alpha(0) \le \alpha_{\Kmax}(0) = O(d)$.
Consider next the segment triangle whose vertices are $\ga(0)$, $\ga(t)$ and $\sigma(t)$. By Toponogov's comparison theorem for hinges \cite[p.~215]{Kli95}, it follows that $d(\ga(t),\sigma(t))$ is bounded from above by the length of an edge in a triangle whose two other edges are of length $d(\ga(0),\ga(t))=t\cdot d + O(d^3)$ and $d(\ga(0),\sigma(t)) = t\cdot d$ and enclose an angle $\alpha(0)= O(d)$, in a space of constant gaussian curvature $\Kmin$. It follows from the law of cosines that this distance is $O(d^2)$, with a bound independent of $t$, hence
\[
d_H(\sigma,\ga) \le \sup_{t\in[0,1]} d(\sigma(t),\ga(t)) = O(d^2).
\] 
\end{proof}

\begin{proposition}
\label{prop:geo_angle}
Let $p,q\in \N$. Assume that there exists a geodesic $\ga$ of length less than $L(\Kmax,\Lambda)$ connecting $p$ and $q$. Then the angle $\theta$ at $p$ between $\ga$ and the segment $\sigma$ connecting $p$ and $q$ satisfies the bound
\[
\theta \le \Lambda\,\LlamK(d(p,q)) + O(d(p,q)^3) = \Lambda d(p,q) + O(d(p,q)^3).
\]
\end{proposition}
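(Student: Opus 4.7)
The plan is to carry out the estimate in the geodesic polar chart $(r,\theta)$ around $p$ used in the proof of \propref{prop:geo_length}, oriented so that the segment $\sigma$ coincides with the radial ray $\theta=0$. The hypothesis $\ell<L(\Kmax,\Lambda)\le \tfrac12\operatorname{inj}(\N,\g)$ keeps both $\sigma$ and $\gamma$ inside the chart. Parametrizing $\gamma$ by arclength as $\gamma(t)=(r(t),\theta(t))$, I will identify the tangent-space angle appearing in the statement (which I also denote $\theta$ by abuse of notation) with $\lim_{t\to 0^+}\theta(t)$; this follows from the Taylor expansion $\exp_p^{-1}\gamma(t)=t\,\dga(0)+O(t^2)$, while \corrref{cor:dist_geo_seg} prevents $\gamma$ from wrapping around $p$ so that a single-valued continuous branch of $\theta(t)$ on $(0,\ell]$ exists. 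Because $\gamma(\ell)=q$ has angular coordinate $0$, the claim reduces to bounding $\int_0^\ell|\dot\theta(t)|\,dt$.

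The next step is a pointwise bound on $|\dot\theta|$, obtained by recycling estimates from \propref{prop:geo_length}. The arclength identity $\dot r^2+\vp^2\dot\theta^2=1$ together with the lower bound $\dot r^2\ge 1-\Lambda^2\Psi^2/\psi^2$ derived there gives $\vp|\dot\theta|\le \Lambda\,\Psi(r)/\psi(r)$. The inequality $\vp_r/\vp\ge\psi_r/\psi$ exploited in that proof integrates, using the initial data $\vp(0,\theta)=\psi(0)=0$, $\vp_r(0,\theta)=\psi_r(0)=1$, to the Rauch-type comparison $\vp(r,\theta)\ge\psi(r)$, hence $|\dot\theta|\le \Lambda\,\Psi(r)/\psi(r)^2$. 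Expanding the explicit formulas for $\Psi$ and $\psi$ at $r=0$ shows $\Psi(r)/\psi(r)^2=\tfrac12+O(r^2)$ uniformly on $[0,L(\Kmax,\Lambda)]$, so $|\dot\theta(t)|\le \tfrac{\Lambda}{2}+O(r(t)^2)$.

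Finally, since $\gamma$ is unit speed and emanates from $p$, $r(t)\le t\le \ell$; integrating the pointwise bound and combining with the length estimate $\ell\le\LlamK(d(p,q))=d(p,q)+O(d(p,q)^3)$ from \propref{prop:geo_length} yields
\[
\theta\le \tfrac{\Lambda}{2}\,\LlamK(d(p,q))+O(d(p,q)^3)\le \Lambda\,\LlamK(d(p,q))+O(d(p,q)^3),
\]
which is the asserted bound (in fact a factor of two to spare). The main delicate point is the very first step: confirming that the tangent-space angle really equals $\lim_{t\to 0^+}\theta(t)$ and that a single-valued branch of $\theta(t)$ is defined along the whole of $\gamma$. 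Everything after that is a routine ODE/integration argument that directly reuses the comparison machinery already developed for \propref{prop:geo_length}.
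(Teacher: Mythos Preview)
Your argument is correct, and it takes a genuinely different route from the paper. The paper reduces to the case where $\gamma$ and $\sigma$ meet only at $p$ and $q$, applies the Gauss--Bonnet theorem to the region they enclose, and bounds the two contributions separately: the line integral of the (Levi-Civita) curvature of $\gamma$ by $\Lambda\,L(\gamma)\le \Lambda\,\LlamK(d)$, and the area integral of $K^\g$ by $O(d^3)$ via \corrref{cor:dist_geo_seg}; the key observation that the angles at $p$ and $q$ have opposite signs then gives $|\theta_p|\le|\theta_p|+|\theta_q|=|\theta_q-\theta_p|$. By contrast, you stay in the polar chart of \propref{prop:geo_length} and bound $\int_0^\ell|\dot\theta|\,dt$ directly, recycling the inequality $\psi\sqrt{1-\dot r^2}\le\Lambda\Psi$ together with the Rauch-type comparison $\vp\ge\psi$ and the expansion $\Psi/\psi^2=\tfrac12+O(r^2)$. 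Your approach is more analytic and squeezes an extra factor of two out of the same ODE machinery; the paper's Gauss--Bonnet argument is more geometric and makes the role of the enclosed area explicit, which is conceptually closer to the rest of the section. Incidentally, the ``delicate point'' you flag is milder than you suggest: since the tangent-space angle equals $\min_{k\in\mathbb Z}|\theta(0^+)-2\pi k|\le |\theta(0^+)|$ for any continuous branch with $\theta(\ell)=0$, the bound $\int_0^\ell|\dot\theta|\,dt$ already dominates the desired angle without a separate non-wrapping argument.
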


\begin{proof}
It suffices to prove the proposition for the case where $\ga$ does not intersect $\sigma$. Indeed, assuming that we prove the proposition for this case, if $\ga$ intersects $\sigma$, let $q'$ be the first point of intersection. Then,
\[
\theta \le \Lambda\,\LlamK(d(p,q')) + O(d(p,q')^3) \le \Lambda\,\LlamK(d(p,q)) +O(d(p,q)^3),
\]
where we used the fact that $\LlamK$ is monotonic.

Let $\theta_p$ and $\theta_q$ denote the signed angles between $\ga$ and $\sigma$ at the points $p$ and $q$. 
Then, the Gauss-Bonnet theorem implies that
\[
\theta_q - \theta_p = \int_{\ga\cup \sigma} k(t)\, dt + \int_{A} K \Volume,
\]
where $A$ is the area enclosed by $\ga$ and $\sigma$. Therefore,
\[
|\theta_q - \theta_p| \le \Lambda \, L(\ga) + \sup |K|\cdot \textVol(A) \le  \Lambda\,\LlamK(d(p,q)) + O(d(p,q)^3),
\]
where the last inequality follows from \propref{prop:geo_length} and \corrref{cor:dist_geo_seg}.
Since $\gamma$ does not intersect $\sigma$, $\theta_p$ and $\theta_q$ have opposite signs, therefore,
\[
\theta = |\theta_p| \le |\theta_p| + |\theta_q| = |\theta_q - \theta_p| \le \Lambda\,\LlamK(d(p,q)) + O(d(p,q)^3).
\]
\end{proof}

\begin{proposition}
\label{prop:geo_exist}
Let $p,q\in \N$ satisfy
\[
d(p,q) \le \LlamK^{-1}\brk{L(\Kmax,\Lambda)}.
\]
Then there exists a unique geodesic $\ga$ connecting $p$ and $q$ whose length is less than $L(\Kmax,\Lambda)$.
\end{proposition}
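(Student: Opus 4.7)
The plan is to study the exponential map $\exp^\nabla_p:T_p\N\to \N$ of the connection $\nabla$. Since $\nabla$ is metric, $\nabla$-geodesics have constant speed, so $t\mapsto\exp^\nabla_p(tv)$ has length $|v|$ on $[0,1]$; \propref{prop:geo_length} then gives, for every $v$ with $|v|<L:=L(\Kmax,\Lambda)$, the two-sided estimate
\[
\LlamK^{-1}(|v|) \le d\brk{p,\exp^\nabla_p(v)} \le |v|,
\]
together with strict monotonicity of $t\mapsto d(p,\exp^\nabla_p(tv))$ on $[0,1]$. I will show $\exp^\nabla_p$ is a bijection from a specific open subset of $B_L(0)$ onto the ball $B_{d_0}(p)$, where $d_0:=\LlamK^{-1}(L)$.

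The first step is to verify that $\exp^\nabla_p$ is a local diffeomorphism throughout $B_L(0)\subset T_p\N$. Flatness of $\nabla$ provides a $\nabla$-parallel orthonormal frame $(E_1,E_2)$ on a simply connected neighborhood of $p$, in which any $\nabla$-geodesic with initial velocity $v=v^iE_i(p)$ is the integral curve of the parallel extension $V_v=v^iE_i$. Writing a Jacobi field with $J(0)=0$ as $J=f^iE_i$ and using $R^\nabla=0$, the Jacobi equation integrates once to the first-order linear ODE $\dot f^i + f^k v^l T^i_{kl}(\ga(t)) = (\nabla_{\dga}J(0))^i$, where $T$ is the torsion of $\nabla$. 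The resulting linear map $\nabla_{\dga}J(0)\mapsto J(1)$ is controlled by the bound $|V|\le\Lambda$, and the choice of $L$ through $L_1(\Kmax,\Lambda)$ ensures it has trivial kernel; hence $d(\exp^\nabla_p)_v$ is invertible for every $|v|<L$.

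Next, I obtain existence by a covering-map argument. Fix any $d_0'\le d_0$, and consider $X:=(\exp^\nabla_p)^{-1}(B_{d_0'}(p))\cap B_L(0)$. The lower bound above forces $X\subset B_{\LlamK(d_0')}(0)$, whose closure is compact in $B_L(0)$. Since $\N$ is compact, any compact subset of the open ball $B_{d_0'}(p)$ has positive distance from $\partial B_{d_0'}(p)$, which together with the preceding inclusion shows that $\exp^\nabla_p|_X:X\to B_{d_0'}(p)$ is proper, hence a covering map. Because $d_0'\le L\le\half\operatorname{inj}(\N,\g)$, the ball $B_{d_0'}(p)$ is simply connected, so the covering is trivial: each connected component of $X$ maps diffeomorphically onto $B_{d_0'}(p)$. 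The component $X_0\ni 0$ yields existence for every $q\in B_{d_0'}(p)$, and passing $d_0'\to d_0$ treats every pair $(p,q)$ in the hypothesis.

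For uniqueness, suppose $X$ had a second component $X_1\ne X_0$. Then $\exp^\nabla_p|_{X_1}$ is also a diffeomorphism onto $B_{d_0'}(p)$, producing some $w\in X_1\setminus\{0\}$ with $\exp^\nabla_p(w)=p$. But strict monotonicity of $t\mapsto d(p,\exp^\nabla_p(tw))$ from Item~1 of \propref{prop:geo_length} forces $d(p,\exp^\nabla_p(w))>0$, a contradiction. Hence $X=X_0$ and the $\nabla$-geodesic is unique. The main obstacle is Step~1: verifying that the conjugate locus of $\nabla$-geodesics at $p$ lies outside $B_L(0)$. The sub-bound $L_1(\Kmax,\Lambda)$ appearing in the definition of $L$ in \propref{prop:geo_length} is precisely what makes the fundamental matrix of the Jacobi ODE non-degenerate throughout $B_L(0)$.
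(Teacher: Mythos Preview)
Your covering-map strategy is different from the paper's and would work once Step~1 is established, but Step~1 as written has a real gap. You assert that ``the choice of $L$ through $L_1(\Kmax,\Lambda)$ ensures [the Jacobi map] has trivial kernel,'' and later that $L_1$ ``is precisely what makes the fundamental matrix of the Jacobi ODE non-degenerate throughout $B_L(0)$.'' Neither claim is substantiated: $L_1$ was introduced in \propref{prop:geo_length} solely to force $\dot r>0$ (radial monotonicity of the distance to $p$ along a curve of bounded curvature), not to bound the conjugate radius of $\nabla$, and your first-order system makes no reference to $\Kmax$ at all. In fact the conclusion of Step~1 \emph{is} true here, but for a structural reason you do not invoke: with $T(X,Y)=\g(V,X)Y-\g(V,Y)X$ in dimension two, the component of $J$ orthogonal to $\dga$ decouples and satisfies the scalar equation $\dot f_\perp - \g(V,\dga)\, f_\perp = c_\perp$, whose solution with $f_\perp(0)=0$ is $c_\perp$ times a strictly positive integral; hence $f_\perp$ never vanishes for $c_\perp\ne0$, and $\exp^\nabla_p$ has no conjugate points whatsoever. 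Without that computation (or an equivalent one), Step~1 is an assertion, not a proof.

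The paper avoids conjugate-point analysis altogether. For existence it uses only that $d\exp^\nabla_p|_0=\id$, so $\exp^\nabla_p$ is a local diffeomorphism near the origin; combined with the containment $\partial\exp^\nabla_p(\Ball(0,t))\subset \Ball(p,t)\setminus\Ball(p,\LlamK^{-1}(t))$ from \propref{prop:geo_length}, a connectedness argument shows $\Ball(p,\LlamK^{-1}(t))\subset\exp^\nabla_p(\Ball(0,t))$ for all $t\le L(\Kmax,\Lambda)$. For uniqueness the paper invokes the Gauss-Bonnet theorem for flat metric connections: two distinct $\nabla$-geodesics from $p$ meeting again at $q$ would bound a geodesic $2$-gon whose interior angles sum to zero, forcing coincidence. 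Thus the paper's route is more elementary; your route, once Step~1 is correctly argued, gives the stronger information that $\exp^\nabla_p$ is a global diffeomorphism onto its image on the relevant ball.
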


\begin{proof}
Denote by $\exp^\nabla_p:\Ball(0,L(\Kmax,\Lambda))\subset T_p\N\to\N$ the exponential map with respect to $\nabla$. That  is, $\exp^\nabla_p(v) = \sigma(1)$, where $\sigma:I\to\N$  is a geodesic with $\sigma(0)=p$ and $\dot{\sigma}(0)=v$. Proposition \ref{prop:geo_length} implies that for all $t\le L(\Kmax,\Lambda)$ and unit vectors $\xi\in T_p\N$, $d(p,\exp^\nabla_p(t\xi))$ is monotonically increasing, and
\beq
\partial \exp^\nabla_p(\Ball(0,t)) \subset 
\Ball(p,t) \setminus \Ball(p,\LlamK^{-1}(t)),
\label{eq:prop3a}
\eeq
where $\Ball$ denotes a ball in $\N$ or $T_p\N$, according to the context.
A classical argument about Riemannian geodesics implies that $d\exp^\nabla_p|_0 = \text{id}$, hence $\exp^\nabla_p$ is a local diffeomorphism at the origin. It follows that there exists $\e>0$, such that the image under $\exp^\nabla_p$ of $\Ball(0,\e)$ is a simply-connected domain that contains $p$. Since
\[
\partial \exp^\nabla_p(\Ball(0,\e)) \subset 
\Ball(p,\e) \setminus \Ball(p,\LlamK^{-1}(\e)),
\]
it follows that,
\beq
\exp^\nabla_p(\Ball(0,\e)) \supset \Ball(p,\LlamK^{-1}(\e)).
\label{eq:prop3b}
\eeq
Equations \eqref{eq:prop3a} and \eqref{eq:prop3b} imply that for all $t\le L(\Kmax,\Lambda)$,
\[
\exp^\nabla_p(\Ball(0,t)) \supset \Ball(p,\LlamK^{-1}(t))
\]
(see \figref{fig:geo_exist}).
In particular, if $q\in \Ball(p,\LlamK^{-1}(L(\Kmax,\Lambda)))$, then
\[
q\in \exp^\nabla_p(\Ball(0,L(\Kmax,\Lambda))),
\]
and there exists a geodesic of length up to $L(\Kmax,\Lambda)$ from $p$ to $q$.

As for uniqueness, two geodesics that emanate from a point $p$ cannot intersect as long as they bound a simply-connected subset of $\N$, for let $q$ be their first point of intersection. By the Gauss-Bonnet theorem for \Wz\ manifolds (see Appendix~\ref{sec:GB}), the interior angles of the geodesic ``2-gon" sum up to zero, which implies that the two geodesics coincide.
\end{proof}

\begin{figure}
\begin{center}
\begin{tikzpicture}
	\tkzDefPoint(0,2){O}
	\tkzFillCircle[R, fill=gray!20, line width=0pt](O,1.4)
	\draw [->] (-2,2) -- (2,2); 
	\draw [->] (0,0) -- (0,4); 
	\tkzText(0,5){$T_p\D$} 
	\tkzText(0.75,2.35){$\Ball(0,t)$} 
	
	\tkzDefPoint(5,2){P}
	\tkzFillCircle[R, fill=gray!10, line width=0pt](P,2)
	\fill[domain=0:360,smooth, samples=100,variable=\x, color=gray!30] plot ({5+(1.5+0.2*sin(4*\x))*sin(\x)},{2+(1.5+0.2*sin(4*\x))*cos(\x)});
	\tkzFillCircle[R, fill=gray!50, line width=0pt](P,1)
	\filldraw [black] (5,2) circle (2pt);
	\tkzLabelPoint[above](P){$p$}	
	\tkzText(5,5){$\D$} 
	
	\fill [color=gray!10] (8,1) -- (8.5,1) -- (8.5,1.5) -- (8,1.5) -- cycle;
	\fill [color=gray!30] (8,2) -- (8.5,2) -- (8.5,2.5) -- (8,2.5) -- cycle;
	\fill [color=gray!50] (8,3) -- (8.5,3) -- (8.5,3.5) -- (8,3.5) -- cycle;
	\node[right] at (9,1.25) {$\Ball(p,t)$};
	\node[right] at (9,2.25) {$\exp^\nabla_p(\Ball(0,t))$};
	\node[right] at (9,3.25) {$\Ball(p,\LlamK^{-1}(t))$};

	\tkzDefPoint(0.8,5){A1}; 
	\tkzDefPoint(4.2,5){A2}; 
	\tkzDrawVector(A1,A2);
	\tkzLabelVector(A1,A2){$\exp^\nabla_p$};
\end{tikzpicture}
\end{center}
\caption{Properties of the image of the exponential map as established in \propref{prop:geo_exist}.}
\label{fig:geo_exist}
\end{figure}

The following corollary applies the results of this section to geodesic triangles:

\begin{corollary}
\label{cor:triangle}
Let $A,B,C\in\N$ be the vertices of a triangle whose edges are segments of lengths $a,b,c$  satisfying
\[
a,b,c <  \LlamK^{-1}\brk{L(\Kmax,\Lambda)}.
\]
Then, 
\begin{enumerate}
\item Every pair of vertices is connected by a unique  geodesic.
\item These geodesics do not intersect.
\item The lengths of the geodesics are bounded by $\LlamK(a)$, $\LlamK(b)$, and $\LlamK(c)$, respectively.
\item The angle $\alpha$ between the pair of geodesics emanating from $A$ and the angle $\alpha_0$ between the pair of segments emanating from $A$ satisfy 
\[
|\alpha - \alpha_0| \le 2\Lambda l + O(l^3),
\]  
where $l=\max(a,b,c)$.
A similar relation holds for the pair of angles $\beta,\beta_0$ and $\gamma,\gamma_0$.
\item $\alpha+\beta+\gamma=\pi$.
\end{enumerate}
(See \figref{fig:triangle}.)
\end{corollary}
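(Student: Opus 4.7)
The plan is to derive each of the five items by applying the propositions of this section to each pair of vertices of the triangle.

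Items~1 and~3 are immediate. Since $d(A,B)=c$, $d(A,C)=b$, and $d(B,C)=a$, each of these distances is at most $\LlamK^{-1}(L(\Kmax,\Lambda))$, so \propref{prop:geo_exist} produces the unique geodesic of length less than $L(\Kmax,\Lambda)$ between each pair of vertices. The length bound of Item~3 is then obtained from \propref{prop:geo_length} applied to each of these geodesics, which gives $\ell\le\LlamK(d)$ with $d$ the length of the corresponding segment.

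For Item~2, any two of the three geodesics share a vertex, so it suffices to show that, say, the geodesics $AB$ and $AC$ do not meet outside $A$. If they met at some point $P\ne A$, then the sub-arcs of $AB$ and $AC$ from $A$ to $P$ would be two geodesics of length less than $L(\Kmax,\Lambda)$ joining $A$ to $P$; the uniqueness part of \propref{prop:geo_exist} would force them to coincide as parametrized curves, hence to have the same initial velocity at $A$, and by standard ODE-uniqueness for geodesics they would extend to the same maximal geodesic, forcing $B=C$---a contradiction. For Item~4, \propref{prop:geo_angle} applied at $A$ bounds the angle between the geodesic to $B$ and the segment to $B$ by $\Lambda\,\LlamK(c)+O(c^3)=\Lambda c+O(c^3)$, and similarly for the pair emanating towards $C$. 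The triangle inequality for angles then gives $|\alpha-\alpha_0|\le \Lambda(b+c)+O(l^3)\le 2\Lambda l + O(l^3)$, and the analogous bounds for $\beta,\gamma$ are obtained at the other vertices. Item~5 follows from the Gauss--Bonnet theorem for \Wz\ manifolds (Appendix~\ref{sec:GB}) applied to the simply-connected region bounded by the three geodesics: flatness of $\nabla$ kills the area integral of gaussian curvature, the geodesic character of the boundary kills the boundary integral of geodesic curvature, and one is left with $\alpha+\beta+\gamma=\pi$.

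The main conceptual step is Item~2: \emph{a priori} two geodesics could loop around and meet, but uniqueness within the length scale $L(\Kmax,\Lambda)$ rules this out. A secondary subtlety is verifying that the region in Item~5 is a genuine simply-connected embedded disk so that Gauss--Bonnet applies in the standard form; this is ensured by Items~2 and~3 together with \corrref{cor:dist_geo_seg}, which confines each geodesic to an $O(l^2)$ Hausdorff neighborhood of its corresponding segment, so for $l$ small enough the three geodesics bound a topological disk lying close to the segment triangle.
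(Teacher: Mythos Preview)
Your proof is correct and follows the same approach as the paper, which simply cites \propref{prop:geo_exist} for Items~1--3, \propref{prop:geo_angle} for Item~4, and the Gauss--Bonnet theorem of Appendix~\ref{sec:GB} for Item~5. Your write-up is in fact more careful than the paper's: you correctly attribute the length bound in Item~3 to \propref{prop:geo_length} rather than to \propref{prop:geo_exist}, you spell out the triangle-inequality step in Item~4, and you note explicitly (via \corrref{cor:dist_geo_seg}) why the geodesic triangle bounds a disk so that Gauss--Bonnet applies.
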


\begin{proof}
Since $l<L(\Kmax,\Lambda)$,
Claims~1--3 are direct consequences of \propref{prop:geo_exist}.  Claim~4 is a direct consequence of \propref{prop:geo_angle}. 
Finally, Claim~5 also follows from the Gauss-Bonnet theorem (see Appendix~\ref{sec:GB}). 
\end{proof}

\begin{figure}
\begin{center}
\begin{tikzpicture}[scale=2]
	\tkzDefPoint(0,0){A};
	\tkzDefPoint(2,0){B};
	\tkzDefPoint(1,1.72){C};
	\tkzDefPoint(-1,1.72){D};
	\tkzDefPoint(1,2.5){E};
	\tkzLabelPoint[left](A){$A$};
	\tkzLabelPoint[right](B){$B$};
	\tkzLabelPoint[above](C){$C$};
	\tkzDrawPoints(A,B,C);
	\tkzDrawSegment[dashed](A,B);
	\tkzDrawSegment[dashed](B,C);
	\tkzDrawSegment[dashed](C,A);
	\tkzLabelSegment[below right](C,A){$b$};
	\tkzLabelSegment[below](A,B){$c$};
	\tkzLabelSegment[above right](B,C){$a$};
	\tkzDefPoint(0.3,0.035){A1};
	\tkzDefPoint(0.11,0.32){A2};
	\tkzDefPoint(1.7,0.035){B1};
	\tkzDefPoint(1.76,0.32){B2};
	\tkzDefPoint(0.82,1.5){C1};
	\tkzDefPoint(1.1,1.5){C2};
	\tkzMarkAngle[size=0.5, line width=1pt, color=red](A1,A,A2);
	\tkzMarkAngle[size=0.6, line width=1pt, color=red](B2,B,B1);
	\tkzMarkAngle[size=0.6, line width=1pt, color=red](C1,C,C2);
	\tkzLabelAngle[pos=0.3](A1,A,A2){$\alpha$};
	\tkzLabelAngle[pos=0.4](B1,B,B2){$\beta$};
	\tkzLabelAngle[pos=0.4](C1,C,C2){$\gamma$};
	\draw [line width=1pt] (0,0) edge[bend left=30,distance=0.2cm] (2,0);
	\draw [line width=1pt] (0,0) edge[bend left=30,distance=0.3cm] (1,1.72);
	\draw [line width=1pt] (2,0) edge[bend left=30,distance=0.2cm] (1,1.72);

\end{tikzpicture}
\end{center}
\caption{A geodesic triangle: The vertices are connected by segments (dashed lines) whose lengths are $a,b,c$. The edges are $\nabla$-geodesics and they are represented by solid lines. The angles $\alpha,\beta,\gamma$ between the edges sum up to $\pi$. For $a,b,c = O(l)$ the lengths of the edges differ from the distances between the vertices  by $O(l^3)$ and the angles between the edges deviate from the angles between the corresponding segments by $O(l)$.}
\label{fig:triangle}
\end{figure}
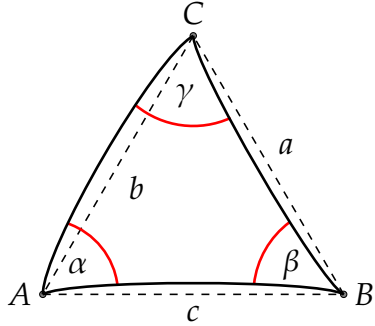

\section{Construction of locally-Euclidean manifolds, $\M_n$}
\label{sec:Mn}

In this section we construct the sequence $\M_n$ approximating $\N$ by triangulating $\N$ with geodesic triangles and replacing each triangle with a locally-Euclidean triangle that encloses an edge-dislocation. 
The following two propositions assert the existence of a regular geodesic triangulation (\propref{prop:triangulation}), and the existence of a locally-Euclidean triangle enclosing an edge-dislocation with a given boundary (\propref{prop:edge_dis}). In the remaining of the section we use these results to define $\M_n$.

From now on we will assume that the manifold $\N$ is simply-connected. We will remove this restriction in \secref{sec:connection}.

\begin{proposition}[Existence of a regular geodesic triangulation]
\label{prop:triangulation}
For every $n\in,\mathbb{N}$ large enough, there exists a subdomain $\N_n\subset \N$, such that
\beq
\label{eq:N_minus_N_n}
\N\setminus \N_n \subset \Ball(\partial\N,3/n),
\eeq
and $\N_n$ can be triangulated by geodesic triangles whose edge lengths are bounded by
\[
\frac{\Lmin}{n} \le \text{edge length} \le \frac{\Lmax}{n},
\]
for some constants $c,C>0$ independent of $n$.
The angles between intersecting edges are in bounded uniformly from $0$ and $\pi$, i.e.
\[
\delta \le \text{angle size} \le \pi-\delta
\]
for some constant $\delta\in (0,\pi)$ independent of $n$.
Moreover, the angles in each triangle sum up to $\pi$.
\end{proposition}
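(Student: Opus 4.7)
The plan is to produce the triangulation in two stages: first build a fine Riemannian \emph{segment} triangulation satisfying all the combinatorial/metric constraints, and then upgrade it to a $\nabla$-geodesic triangulation using the existence, uniqueness and angle-comparison results of Section~\ref{sec:geo}.

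For the first stage, I would use standard Riemannian-triangulation technology. Choose $\N_n$ to be the set of points in $\N$ whose distance from $\partial\N$ exceeds $2/n$; the Lipschitz assumption on $\partial\N$ gives $\N\setminus\N_n\subset\Ball(\partial\N,3/n)$ once $n$ is large enough, which is the inclusion \eqref{eq:N_minus_N_n}. Next, cover $\bar\N$ by finitely many normal-coordinate balls $(U_\alpha,\vp_\alpha)$ on which the metric $\g$ is $C^1$-close to the Euclidean metric (by compactness one may assume a uniform lower bound on the injectivity radius and uniform two-sided bounds on the metric tensor in coordinates). Pulling back a standard equilateral grid of side $1/n$ in $\R^2$ via the $\vp_\alpha$, and gluing the local grids using a classical refinement argument (e.g.\ the Cairns--Whitehead construction, or the explicit procedure in Munkres' \emph{Elementary Differential Topology}), produces a simplicial triangulation $\mathcal{T}_n$ of $\N_n$ whose simplices are geodesic triangles in the \emph{Riemannian} sense, with edge lengths between $\Lmin/n$ and $\Lmax/n$ and angles uniformly bounded away from $0$ and $\pi$ by some $\delta$ independent of $n$. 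The uniformity follows because the local grids are genuinely equilateral in coordinates, the metric distortion in each chart is $O(1/n)$ on balls of radius $O(1/n)$, and the finite cover produces only finitely many combinatorial gluing patterns.

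For the second stage, fix $n$ so large that $\Lmax/n<\LlamK^{-1}(L(\Kmax,\Lambda))$, with $\Kmax=\max_\N K^\g$ and $\Kmin=\min_\N K^\g$. For each pair of adjacent vertices $p,q$ of $\mathcal{T}_n$, \propref{prop:geo_exist} provides a unique short $\nabla$-geodesic $\ga_{pq}$ of length less than $L(\Kmax,\Lambda)$ connecting them; by \propref{prop:geo_length} its length is at most $\LlamK(d(p,q))=d(p,q)+O(1/n^3)$, so the new edge lengths still lie between $\Lmin/(2n)$ and $2\Lmax/n$ (after adjusting constants). Replace every Riemannian segment edge of $\mathcal{T}_n$ by the corresponding $\ga_{pq}$. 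Uniqueness guarantees that two adjacent triangles agree on their shared edge, so the resulting decomposition $\mathcal{T}_n^\nabla$ is well-defined combinatorially. The non-intersection of distinct geodesic edges meeting only at a common vertex follows from \corrref{cor:dist_geo_seg}: each $\ga_{pq}$ lies in an $O(1/n^2)$-Hausdorff neighborhood of its segment, which is much smaller than the gap $\Omega(1/n)$ between non-adjacent segments of $\mathcal{T}_n$ enforced by the uniform angle lower bound $\delta$. Hence the $\nabla$-geodesic triangles have disjoint interiors and tile $\N_n$. By \corrref{cor:triangle}(4), each angle of the new triangulation differs from the corresponding Riemannian angle by $O(1/n)$, so the uniform $\delta/2 \le \text{angle} \le \pi-\delta/2$ bound holds for large $n$. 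Finally, \corrref{cor:triangle}(5) gives the angle-sum $\pi$ in every triangle.

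The main obstacle is really the first stage: producing a single Riemannian segment triangulation with \emph{uniform} length and angle bounds as $n\to\infty$. Ad hoc local grids glue together cleanly only if one controls the interaction between charts, which is where the classical refinement lemmas are needed; the alternative is to invoke a packaged result such as Peltonen's quantitative thick triangulation theorem. Once that uniform regular Riemannian triangulation is in hand, the passage to $\nabla$-geodesics is purely a consequence of the short-geodesic calculus developed in \secref{sec:geo}, and the approximation errors there ($O(1/n^3)$ for lengths, $O(1/n)$ for angles) are manifestly harmless relative to the scales $\Omega(1/n)$ and $\Omega(1)$ that they perturb.
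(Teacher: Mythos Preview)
Your proposal is correct and follows essentially the same two-stage strategy as the paper: first build a regular Riemannian segment triangulation of a subdomain bounded away from $\partial\N$, then replace each segment edge by the unique short $\nabla$-geodesic between its endpoints and verify the required bounds via \propref{prop:geo_length}, \corrref{cor:dist_geo_seg}, \propref{prop:geo_exist} and \corrref{cor:triangle}. The only difference is one of emphasis: the paper disposes of the first stage in one line by citing a reference (Berger) for the existence of a uniformly regular segment triangulation, whereas you sketch an explicit construction via normal-coordinate charts and classical refinement lemmas; conversely the paper is slightly more careful in distinguishing the segment-triangulated domain $\N_n'$ from the geodesic-triangulated domain $\N_n$ and in placing $\partial\N_n'$ in the annulus $\Ball(\partial\N,2/n)\setminus\Ball(\partial\N,1/n)$ so that the $O(1/n^2)$-perturbed geodesic edges provably avoid $\partial\N$.
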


\begin{proof}
Let $n > 1/\LlamK^{-1}(L(\Kmax,\Lambda))$.
First, triangulate a subdomain $\N'_n\subset \N$ by segments of edge length
\[
\frac{\Lmin}{n} \le \text{edge length} \le \frac{\Lmax}{n},
\]
and angles
\[
\delta \le \text{angle size} \le \pi-\delta,
\]
such that
\beq
\label{eq:plN_n}
\partial \N_n' \subset \Ball(\partial\N,2/n) \setminus \Ball(\partial\N,1/n).
\eeq
Such a construction is always possible for large enough $n$, see e.g. \cite[~p. 157]{Ber02}.
Then take $\N_n$ to be the geodesic triangulation based on the graph structure of $\N_n'$ (see \figref{fig:triangulation}). For large enough $n$, the geodesics never hit the boundary as a result of \eqref{eq:plN_n} and \corrref{cor:dist_geo_seg}, which also imply \eqref{eq:N_minus_N_n}. The other properties of $\N_n$ for $n$ large enough are direct consequences of \corrref{cor:triangle}, up, possibly, to a slight adjustment of the constants $\Lmin$, $\Lmax$ and $\delta$.
\end{proof}

\begin{figure}
\begin{center}
\begin{tikzpicture}[scale=1.5]
	\foreach \x in {1,...,4}
	{
		\foreach \y in {1,...,2}
		{
			\tkzDefPoint(\x,2*0.86603*\y){A};
			\tkzDrawPoint(A);
			\tkzDefPoint(\x+0.5,2*0.86603*\y + 0.86603){B};
			\tkzDrawPoint(B);
		}
	}  
	\foreach \x in {1,...,4}
	{
		\foreach \y in {1,...,2}
		{
			\tkzDefPoint(\x,2*0.86603*\y){A1};
			\tkzDefPoint(\x+1,2*0.86603*\y){A2};
			\tkzDefPoint(\x+0.5,2*0.86603*\y+0.86603){A3};
			\tkzDefPoint(\x-0.5,2*0.86603*\y+0.86603){A4};
			\tkzDrawSegment[dashed](A1,A2);
			\tkzDrawSegment[dashed](A1,A3);
			\tkzDrawSegment[dashed](A1,A4);
			\draw [line width=1pt] (\x,2*0.86603*\y) edge[bend left=30,distance=0.2cm] (\x+1,2*0.86603*\y);
			\draw [line width=1pt] (\x,2*0.86603*\y) edge[bend left=30,distance=0.2cm] (\x+0.5,2*0.86603*\y+0.86603);
			\draw [line width=1pt] (\x,2*0.86603*\y) edge[bend left=30,distance=0.2cm] (\x-0.5,2*0.86603*\y+0.86603);
			
			\tkzDefPoint(\x+0.5,2*0.86603*\y+0.86603){B1};
			\tkzDefPoint(\x-0.5,2*0.86603*\y+0.86603){B2};
			\tkzDefPoint(\x+1,2*0.86603*\y+0.86603+0.86603){B3};
			\tkzDefPoint(\x,2*0.86603*\y+0.86603+0.86603){B4};
			\tkzDefPoint(\x+1,2*0.86603*\y){B5};
			\tkzDrawSegment[dashed](B1,B3);
			\tkzDrawSegment[dashed](B1,B4);
			\tkzDrawSegment[dashed](B1,B2);
			\tkzDrawSegment[dashed](B1,B5);
			\draw [line width=1pt] (\x+0.5,2*0.86603*\y+0.86603) edge[bend left=30,distance=0.2cm] (\x-0.5,2*0.86603*\y+0.86603);
			\draw [line width=1pt] (\x+0.5,2*0.86603*\y+0.86603) edge[bend right=30,distance=0.2cm] (\x+1,2*0.86603*\y);
			\draw [line width=1pt] (\x+0.5,2*0.86603*\y+0.86603) edge[bend left=30,distance=0.2cm] (\x+1,2*0.86603*\y+0.86603+0.86603);
			\draw [line width=1pt] (\x+0.5,2*0.86603*\y+0.86603) edge[bend left=30,distance=0.2cm] (\x,2*0.86603*\y+0.86603+0.86603);
		}
	}  
\end{tikzpicture}
\end{center}
\caption{Triangulation of the domain $\N_n$ by geodesic triangles. The dashed lines are segments of length $O(1/n)$ triangulating $\N'_n$. The solid lines are the geodesic curves connecting those vertices.}
\label{fig:triangulation}
\end{figure}
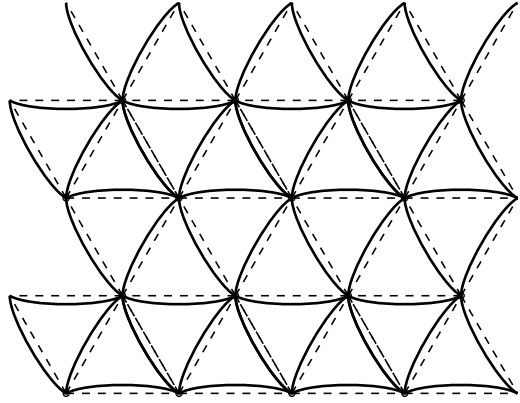

\begin{corollary}
\label{cor:vertex_edge_dist}
In the triangulation described in \propref{prop:triangulation}, the distance between a vertex and the opposite edge in a triangle is larger than $r/n$ for some $r>0$ independent of $n$.
\end{corollary}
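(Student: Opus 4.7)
My plan is to reduce the claim to a quantitative estimate about the auxiliary \emph{segment} triangle $T_0$ whose vertices are those of the geodesic triangle $T$ but whose edges are the length-minimizing segments, and then transfer the bound to $T$ via the Hausdorff distance estimate of \corrref{cor:dist_geo_seg}. The reason this should work is that $T_0$ is a Riemannian triangle of diameter $O(1/n)$ with controlled angles, so it is essentially Euclidean, and standard Euclidean trigonometry produces an altitude of order $1/n$; meanwhile the geodesic edges of $T$ differ from the corresponding segments only at order $1/n^2$, which is negligible.

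More concretely, first I would invoke Claim~4 of \corrref{cor:triangle}: the angles of $T_0$ differ from those of $T$ by $O(1/n)$, so for $n$ large enough the segment-triangle angles lie in $[\delta/2,\pi-\delta/2]$, while the segment lengths remain in $[\Lmin/n,\Lmax/n]$ by construction. Next I would estimate the distance from a vertex $A$ of $T_0$ to the opposite segment $BC$. Letting $P$ be the foot of the Riemannian perpendicular from $A$ to $BC$ (or an endpoint if the foot is outside $BC$; this latter case only helps), I apply Toponogov's hinge comparison theorem to the hinge at $B$ with angle $\beta_0\ge\delta/2$ and sides of length $c\ge\Lmin/n$ and $|BP|\le a\le\Lmax/n$, together with a symmetric comparison in a space of constant curvature bounded above by $\Kmax$. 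Since $\operatorname{diam}(T_0)=O(1/n)$, the relation between the sides and the angle is the Euclidean one up to $O(1/n^3)$, which yields
\[
d(A,BC)\ge c\sin\beta_0 - O(1/n^3) \ge \frac{\Lmin\sin(\delta/2)}{n} - O(1/n^3).
\]

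Finally, by \corrref{cor:dist_geo_seg} the Hausdorff distance between each geodesic edge of $T$ and the corresponding segment of $T_0$ is $O(1/n^2)$. Hence for any point $q$ on the geodesic opposite to $A$, there is a point $q'$ on the segment opposite to $A$ with $d(q,q')=O(1/n^2)$, so $d(A,q)\ge d(A,q')-O(1/n^2)\ge d(A,BC)-O(1/n^2)$. Taking the infimum over $q$ and combining with the previous step gives a lower bound of the form $\Lmin\sin(\delta/2)/n - O(1/n^2)$, which is at least $r/n$ for some $r>0$ and every $n$ sufficiently large; any finite set of small $n$ can be absorbed by shrinking $r$. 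The only delicate point is justifying the Euclidean-type altitude inequality in a Riemannian setting, but this is exactly what Rauch/Toponogov comparison delivers once the angles are bounded away from $0$ and $\pi$ and the diameter is small.
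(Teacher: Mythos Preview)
Your argument is correct and follows the same overall plan as the paper: reduce to the segment triangle via \corrref{cor:dist_geo_seg}, then use comparison geometry together with the uniform angle bounds to produce an altitude of order $1/n$. The paper's execution of the second step differs in style: it argues by contradiction, assuming a sequence of triangles with altitude $o(1/n)$, drops a segment from the vertex $C$ to the nearest point $E$ on the opposite side, applies Rauch's angle comparison to the segment triangle $ACE$ to bound its angles from above by those of a constant-curvature comparison triangle, and then extracts a contradiction from the law of sines together with the angle-sum identity $\alpha'+\gamma_1'+\epsilon_1'=\pi+O(1/n^2)$. Your direct approach via Toponogov's hinge comparison and the near-Euclidean law of cosines yields the explicit estimate $d(A,BC)\ge c\sin\beta_0 - O(1/n^3)$, which is somewhat cleaner and makes the constant $r$ more transparent. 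One minor remark: for the lower bound on $d(A,BC)$ only the Toponogov direction (curvature bounded below by $\Kmin$) is needed; your mention of a comparison with the upper bound $\Kmax$ plays no role here and can be dropped.
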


\begin{proof}
By \corrref{cor:dist_geo_seg}, it is sufficient to consider the segment triangulation used in the proof of \propref{prop:triangulation}, instead of the geodesic triangulation, since the distance between the opposite segment and the opposite edge is $O(n^{-2})$.

Assume by contradiction that there exists a sequence of triangles, with the $n$-th triangle belonging to the $n$-th triangulation, such that the distance between one of the vertices and its opposite edge is $o(1/n)$.
Consider the segment triangle $ACE$ in \figref{fig:vertex_edge_dist}. By Rauch's comparison theorem for triangles (see \cite[p.~215]{Kli95}), the angles $\alpha, \gamma_1 , \epsilon_1$ are smaller than the angles $\alpha', \gamma_1', \epsilon_1'$ of the segment triangle with same side lengths in a space of constant curvature $\Kmax$ (the upper bound on the gaussian curvature). 
Assume $\Kmax>0$ (the other cases are analogous). In a space of constant curvature the law of sines reads
\[
\frac{\sin\brk{\sqrt{\Kmax} d}}{\sin\brk{\sqrt{\Kmax} b}} = \frac{\sin(\alpha')}{\sin(\epsilon_1')} \ge \sin(\alpha')
\]
Since $b\in (\Lmin /n,\Lmax /n)$, it follows that if $d=o(1/n)$, then $\sin(\alpha') = o(1)$. Since $\alpha'\ge\alpha>\delta$, it follows that $\pi -\alpha' = o(1)$, hence $\gamma_1' + \epsilon_1' = o(1)$ ($\alpha'  + \gamma_1'+\epsilon_1' = \pi + O(1/n^2)$ by the Gauss-Bonnet theorem). This is a contradiction, since 
\[
\gamma_1' + \epsilon_1' \ge \gamma_1 + \epsilon_1 = \pi - \alpha + \Oof{2} > \delta + \Oof{2}.
\]
\end{proof}
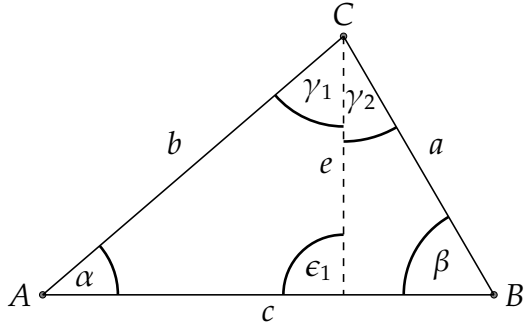
\begin{figure}
\begin{center}
\begin{tikzpicture}[scale=2]
	\tkzDefPoint(-1,0){A};
	\tkzDefPoint(2,0){B};
	\tkzDefPoint(1,1.72){C};
	\tkzDefPoint(1,0){E};
	\tkzLabelPoint[left](A){$A$};
	\tkzLabelPoint[right](B){$B$};
	\tkzLabelPoint[above](C){$C$};
	\tkzDrawPoints(A,B,C);
	\tkzDrawSegment(A,B);
	\tkzDrawSegment(B,C);
	\tkzDrawSegment(C,A);
	\tkzDrawSegment[dashed](C,E);
	\tkzLabelSegment[above left](C,A){$b$};
	\tkzLabelSegment[below](A,B){$c$};
	\tkzLabelSegment[above right](B,C){$a$};
	\tkzLabelSegment[left](C,E){$e$};
	\tkzMarkAngle[size=0.5, line width=1pt](B,A,C);
	\tkzMarkAngle[size=0.6, line width=1pt](C,B,A);
	\tkzMarkAngle[size=0.6, line width=1pt](A,C,E);
	\tkzMarkAngle[size=0.7, line width=1pt](E,C,B);
	\tkzMarkAngle[size=0.4, line width=1pt](C,E,A);
	\tkzLabelAngle[pos=0.3](C,A,B){$\alpha$};
	\tkzLabelAngle[pos=0.4](C,B,A){$\beta$};
	\tkzLabelAngle[pos=0.4](A,C,E){$\gamma_1$};
	\tkzLabelAngle[pos=0.45](E,C,B){$\gamma_2$};
	\tkzLabelAngle[pos=0.23](C,E,A){$\epsilon_1$};
\end{tikzpicture}
\end{center}
\caption{A segment triangle $ABC$. 
The edge lengths satisfy 
$a,b,c \in (\protect\Lmin/n,\Lmax /n)$, 
and the angles satisfy $\alpha,\beta,\gamma_1+\gamma_2 \in (\delta,\pi-\delta)$. 
$e$ is a segment that minimizes the distance between the vertex $C$ and the edge $c$.}
\label{fig:vertex_edge_dist}
\end{figure}

The next proposition shows that we can associate
with any geodesic triangle a locally-flat triangle with an edge dislocation (a pair of cone singularities of equal magnitudes and opposite signs), that has the same edge lengths and angles. These triangles will be the building block of the approximating sequence of manifolds $\M_n$.

\begin{proposition}
\label{prop:edge_dis}
Let $\theta\in (0,\pi/2)$. Let $a,b,c$ be numbers satisfying
\beq
\frac{\Lmin}{n} \le  a,b,c \le \frac{\Lmax}{n},
\label{eq:edge_cond}
\eeq
such that the sum of any two of them is larger than the third. Let $\alpha_0,\beta_0,\gamma_0$ be the angles of a (Euclidean) triangle whose side lengths are $a,b,c$. Suppose that
$\alpha,\beta,\gamma$ satisfy
\[
\alpha + \beta + \gamma = \pi.
\]
and
\beq
|\alpha-\alpha_0|, |\beta-\beta_0|, |\gamma-\gamma_0| \le \frac{K}{n}.\label{eq:angle_cond}
\eeq
Then, for $n$ large enough, there exists a locally-Euclidean triangle of edge-lengths $a,b,c$ and respective angles $\alpha,\beta,\gamma$, enclosing an edge-dislocation of magnitude $\e=O(1/n^2)$ and a disclination angle $\theta$. 
\end{proposition}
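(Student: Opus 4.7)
The plan is to construct the required triangle by an explicit Volterra-type cut-and-glue procedure applied to a Euclidean model. I would begin with the Euclidean triangle $T_0$ of edge lengths $a,b,c$, whose angles are $\alpha_0,\beta_0,\gamma_0$. Choose an interior segment $\sigma$ of length $\e$ centered at an interior point $P_0$ with direction $\phi$, and perform the Volterra procedure: cut $T_0$ along $\sigma$, insert a thin wedge of angular opening $\theta$ along one side of the cut, and re-glue. The resulting surface is locally Euclidean away from the endpoints of $\sigma$, which become cone singularities of angular defects $+\theta$ and $-\theta$ separated by a distance of order $\e$. This realizes an edge-dislocation with disclination angle $\theta$ and magnitude $O(\e)$ inside a locally-Euclidean triangle whose outer boundary still consists of three straight edges meeting at the original vertices $A, B, C$.

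Having constructed the surface, the task reduces to verifying that the vertex angles at $A,B,C$ can be tuned to match the prescribed $\alpha,\beta,\gamma$. By the Gauss-Bonnet theorem (Appendix~\ref{sec:GB}) applied to a geodesic triangle containing both cone singularities, the angles automatically sum to $\pi$, in agreement with the hypothesis. Hence only two of the deviations $\alpha-\alpha_0,\beta-\beta_0,\gamma-\gamma_0$ are independent, and I would consider the map $\Phi:(P_0,\phi,\e)\mapsto(\alpha-\alpha_0,\beta-\beta_0)$. It vanishes at $\e=0$ and depends continuously on all parameters. Computing $\partial_\e\Phi$ by developing the cut-and-glue into $\R^2$ and tracking how the geodesic edges emanating from $A,B,C$ rotate as they cross the cut, one obtains a leading-order dependence of the form $\partial_\e \Phi|_{\e=0}=\sin(\theta)\cdot A(P_0,\phi)$, where $A$ is a $2\times 2$ matrix that is non-degenerate for a generic choice of $(P_0,\phi)$. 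The assumption $\theta\in(0,\pi/2)$ ensures $\sin\theta$ is bounded below.

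An application of the quantitative implicit function theorem then produces $(P_0,\phi,\e)$ realizing the prescribed angle deviations. The scale $\e=O(1/n^2)$ arises from combining the target deviation bound $|\alpha-\alpha_0|\le K/n$ with the linearization $|\partial_\e\Phi|=O(\sin\theta\cdot n^{-1})$, the latter factor of $n^{-1}$ coming from the triangle's overall scale $O(1/n)$ on which the dipole's perturbation acts. The main technical obstacle is the explicit geometric computation of $\partial_\e\Phi$: one must unfold the cut-and-glue into a developing map, track the first-order displacement of the vertex angles under variation of the dipole, and verify the non-degeneracy of $A$. This is an elementary but intricate trigonometric computation, and it must be done with care in handling the dipole direction $\phi$ relative to the triangle geometry.
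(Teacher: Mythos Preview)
Your proposal has a genuine gap at its starting point. You perform the Volterra cut along an \emph{interior} segment $\sigma$ of the Euclidean triangle $T_0$, and you note yourself that ``the outer boundary still consists of three straight edges meeting at the original vertices $A,B,C$.'' But the vertex angles are purely local quantities: they are determined by the flat metric in a neighbourhood of $A,B,C$, and your surgery is supported away from the boundary. Hence the angles after the surgery are still exactly $\alpha_0,\beta_0,\gamma_0$, and the map $\Phi(P_0,\phi,\e)=(\alpha-\alpha_0,\beta-\beta_0)$ is identically zero for all $(P_0,\phi,\e)$. There is nothing for the implicit function theorem to act on; no choice of interior dipole will perturb the boundary angles at all. (Your remark about ``tracking how the geodesic edges emanating from $A,B,C$ rotate as they cross the cut'' does not apply: those edges are the boundary and never cross $\sigma$.) The scaling argument is also inconsistent: with $|\alpha-\alpha_0|=O(1/n)$ and your claimed $|\partial_\e\Phi|=O(1/n)$ you would obtain $\e=O(1)$, not $O(1/n^2)$.

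The paper's construction is fundamentally different and entirely explicit. The defective ``triangle'' is built from two flat Euclidean polygons $ADFEGC$ and $BD'F'E'G'$ glued along a path $DFEG\sim D'F'E'G'$ that runs from a point $D$ on the edge $AB$ to a point $G$ on the edge $BC$; the cone points $E,F$ (with defects $\pm\, 2\theta$) lie on this gluing path. Because the gluing path reaches the boundary, vertex $B$ sits in one polygon and vertices $A,C$ in the other, so the angles $\alpha,\beta,\gamma$ can be prescribed as interior angles of the polygons, independently of the edge lengths $a,b,c$. One then solves explicitly for the cut direction $\varphi$ and the dislocation magnitude $\e$: elementary trigonometry gives
\[
\tan\varphi=\frac{b-a\cos\gamma-c\cos\alpha}{a\sin\gamma-c\sin\alpha},
\qquad
\e=\frac{c\sin\alpha-a\sin\gamma}{\cos\varphi},
\]
and one checks directly that $\varphi\in(0,\gamma)$ and that $c\sin\alpha-a\sin\gamma=O(1/n^2)$ under the hypotheses \eqref{eq:edge_cond}--\eqref{eq:angle_cond}. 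No implicit function theorem is needed.
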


\begin{proof}
If $a,b,c$ are $\alpha,\beta,\gamma$ are compatible with Euclidean geometry then the claim is trivial with $\e=0$ (no dipole).  Otherwise, the law of sines  
\[
\frac{a}{\sin\alpha} = \frac{b}{\sin\beta} = \frac{c}{\sin\gamma}
\]
does not hold. 

Consider  \figref{fig:dipole}. It shows two (Euclidean) polygons $ADFEGC$ and $BD'F'E'G'$. The dipole is constructed by identifying the edges $DF$ with $D'F'$, $EF$ with $E'F'$ and $EG$ with $E'G'$, each pair assumed of same length. Also, 
\[
\sangle EFD = \sangle E'F'D' = \sangle FEG = \sangle F'E'G' = \pi-\theta.
\] 
The dipole magnitude is 
\[
|\e| = 2\,|EF|\,\sin\theta.
\] 
If $\e>0$ we  elongate the edge opposite to $\gamma$ and shorten the edge opposite to $\alpha$; if $\e<0$ it is the other way around.
We need to prove that such a construction is possible, under the constraints
\[
AD  + D'B = c 
\Textand
CG  + G'B = a,
\]
$\vp\in(0,\gamma)$, and $\e = O(1/n^2)$.

By means of \figref{fig:dipole} and straightforward trigonometry we obtain that $\vp$ and $\e$ are given by
\beq
\tan\vp = \frac{b -a\,\cos\gamma - c\,\cos\alpha}{a\,\sin\gamma  - c\,\sin\alpha},
\label{eq:tanvp}
\eeq
and
\[
\e  = \frac{c\,\sin\alpha - a\,\sin\gamma}{\cos\vp}.
\]

Consider \eqref{eq:tanvp}. In a Euclidean triangle both the numerator and the denominator vanish. Order the vertices of the triangle such that $a/\sin\alpha$, $b/\sin\beta$ and $c/\sin\gamma$ form a monotone sequence. The sign of the numerator of \eqref{eq:tanvp} does not change whether we take an increasing or decreasing sequence. If it is positive we let $a/\sin\alpha > c/\sin\gamma$, otherwise we let $a/\sin\alpha < c/\sin\gamma$. Thus we ensure that $\tan\vp>0$.

We now prove that $\vp<\gamma$. 
Take for example the case where
\[
\frac{a}{\sin\alpha} \ge \frac{b}{\sin\beta} \ge \frac{c}{\sin\gamma},
\]
where at least one of the inequalities is strong (the other case is similar). Then,
\[
\begin{split}
a - b\,\cos\gamma - c\,\cos\beta &> a - \frac{a\sin\beta}{\sin\alpha} \,\cos\gamma -
\frac{a\,\sin\gamma}{\sin\alpha}\,\cos\beta \\
&= \frac{a}{\sin\alpha}\brk{\sin\alpha - \sin(\beta + \gamma)} = 0.
\end{split}
\]
Therefore,
\[
\begin{split}
\frac{\tan\vp}{\tan\gamma} &= \frac{b\cos\gamma -a\,\cos^2\gamma - c\,\cos\alpha\cos\gamma}{a\,\sin^2\gamma  - c\,\sin\alpha\sin\gamma} \\
&= \frac{b\cos\gamma -a +a\,\sin^2\gamma + c \cos\beta-  c\,\sin\alpha\sin\gamma}{a\,\sin^2\gamma  - c\,\sin\alpha\sin\gamma} \\
&= 1 - \frac{1}{\sin\gamma} \frac{a-b\cos\gamma - c \cos\beta}{a\,\sin\gamma  - c\,\sin\alpha} < 1,
\end{split}
\]
which proves that indeed $\vp\in(0,\gamma)$.

It remains to show that $\e = O(1/n^2)$. From our bound on $\vp$ it follows that $\cos\vp$ is bounded away from zero  for large $n$. From Assumptions \eqref{eq:edge_cond} and \eqref{eq:angle_cond} it follows that $c\,\sin\alpha - a\,\sin\gamma= O(1/n^2)$, which implies $\e = O(1/n^2)$.
\end{proof}

\begin{figure}
\begin{center}
\begin{tikzpicture}[scale=1.5]
	\tkzDefPoint(0,0){B};
	\tkzDefPoint(3,2){Cp};
	\tkzDefPoint(2.0,2){Ep};
	\tkzDefPoint(1.5,2.5){Fp};
	\tkzDefPoint(0.7,2.5){Dp};
	\tkzDrawPolygon[fill=gray!5](B,Cp,Ep,Fp,Dp);
	\tkzDrawPoints(B,Cp,Ep,Fp,Dp);
	\tkzDefPoint(0.7,2.8){D};
	\tkzDefPoint(1.5,2.8){F};
	\tkzDefPoint(2.5,2.8){FF};
	\tkzDefPoint(2,3.3){E};
	\tkzDefPoint(3,3.3){C};
	\tkzDefPoint(3+1,3.3+0.666){newC};
	\tkzDefPoint(2.7,3.3+0.666){tmp};
	\tkzDefPoint(1.19+0.2,4.25+0.580){A};
	\tkzDrawPoints(A,D,E,F,C,newC);
	\tkzDrawPolygon[fill=gray!5](A,newC,C,E,F,D);
	\tkzLabelSegment[above](A,newC){$b$}
	\tkzDrawSegment[dashed](newC,tmp)
	\tkzLabelPoint[below](B){$B$}
	\tkzLabelPoint[above](A){$A$}
	\tkzLabelPoint[left](D){$D$}
	\tkzLabelPoint[left](Dp){$D'$}
	\tkzLabelPoint[right](C){$G$}
	\tkzLabelPoint[right](newC){$C$}
	\tkzLabelPoint[right](Cp){$G'$}
	\tkzLabelPoint[above left=0.01](F){$F$}
	\tkzLabelPoint[below left=0.01](Fp){$F'$}
	\tkzLabelPoint[left](E){$E$}
	\tkzLabelPoint[left](Ep){$E'$}
	\tkzLabelAngle[pos=0.3](D,A,newC){$\alpha$}
	\tkzMarkAngle[size=0.5](D,A,newC)
	\tkzLabelAngle[pos=0.4](Cp,B,Dp){$\beta$}
	\tkzMarkAngle[size=0.6](Cp,B,Dp)
	\tkzText(3.6,3.82){ $\gamma$}
	\tkzMarkAngle[size=0.6](A,newC,C)
	\tkzLabelAngle[pos=0.8](A,newC,tmp){$\vp$}
	\tkzMarkAngle[size=1](A,newC,tmp)
	\tkzText(2.4,1.87){{\small $\gamma-\vp$}}
	\tkzMarkAngle[arc=lll,size=0.95](Ep,Cp,B)
	\tkzDrawSegment[dashed](F,FF);
	\tkzMarkAngle[size=0.5](FF,F,E)
	\tkzLabelAngle[pos=0.35](FF,F,E){{\small $\theta$}}
\end{tikzpicture}
\end{center}
\caption{Construction of a Euclidean triangle with an edge-dislocation used in the proof of \propref{prop:edge_dis}.}
\label{fig:dipole}
\end{figure}
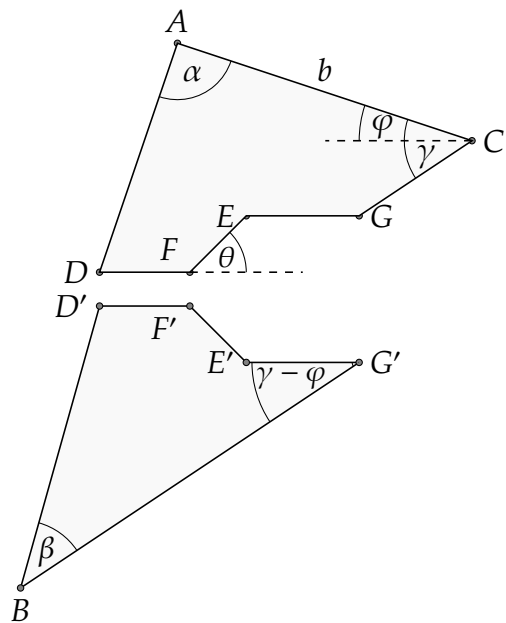

\begin{comment}
The line $EF = E'F'$, whose length is $O(1/n^2)$,  is called the \emph{dislocation line}. 
By shifting it, either horizontally or vertically, we can always modify the construction so that  the singular points $E$ and $F$, are  located at the center of the defective triangle, in the sense that their distance from the boundary is \revised{of order $1/n$}. In particular, this guarantees that when $\e<0$ the  points $E$ and $F$ remain in the interior of the triangle.
\end{comment}

\begin{definition}
Let $a,b,c$, $\alpha,\beta,\gamma$, and $\theta$ satisfy the conditions of \propref{prop:edge_dis}. We denote by
\[
\Delta(a,b,c;\alpha,\beta,\gamma;\theta)
\]
the locally-Euclidean triangle enclosing an edge-dislocation constructed in \propref{prop:edge_dis}. We assume that the dislocation line is at the center of the triangle as explained in the above comment.
\end{definition}

We now turn to construct a sequence of locally-flat manifolds with path-independent Levi-Civita connections, $(\M_n,\g_n,\nabla_n)$, that approximate the \Wz\ manifold $(\N,\g,\nabla)$. By \propref{prop:triangulation}, there exists a subdomain $\N_n\subset\N$ that can be triangulated by geodesic triangles $\N_{n,i}$ for $i\in I_n$, 
\[
\N_n = \bigcup_{i\in I_n} \N_{n,i}\,\,,
\]
whose edges $a_{n,i}, b_{n,i}, c_{n,i}$ and angles $\alpha_{n,i}, \beta_{i,n}, \gamma_{n,i}$ satisfy conditions \eqref{eq:edge_cond} and \eqref{eq:angle_cond}. Note that $|I_n| = O(n^2)$.

For every $i\in I_n$ we construct a locally-Euclidean triangle with edge-dislocation
\[
\tM_{n,i} = \Delta(a_{n,i}, b_{n,i}, c_{n,i};\alpha_{n,i}, \beta_{i,n}, \gamma_{n,i};\theta).
\]
We denote by $\tM_n$ the amalgam of the $\tM_{n,i}$, obtained by replacing each $\N_{n,i}$ by the corresponding $\tM_{n,i}$.

$\tM_n$ is a compact, simply-connected topological manifold with corners. It is smooth and locally-Euclidean everywhere except at two singular points within each triangle $\tM_{n,i}$. Removing the dislocation line that connects each pair of singular points, the Levi-Civita parallel transport $\nabla_n$ is path-independent (see \cite{KM15} for more detail about a similar construction). We denote by $\g_n$ the Riemannian metric on the smooth part of $\tM_n$ and by $\tilde{d}_n$ the induced distance function on $\tM_n$. The Riemannian metric $\g_n$ induces also a volume measure on $\tM_n$ (including the singular points).

Let $L_{n,i}\subset\tM_{n,i}$ be open neighborhoods of radius $1/n^2$ around the dislocation line, and let
\[
L_n = \bigcup_{i\in I_n} L_{n,i}.
\]

Set $\M_{n,i} = \tM_{n,i}\setminus L_{n,i}$ and define
\[
\M_n =  \bigcup_{i\in I_n} \M_{n,i} = \tM_n\setminus L_n.
\] 
$(\M_n,\g_n)$ is a smooth, compact, multiply-connected, locally-Euclidean manifold with corners. 
We denote by $d_n$ the length distance induced by $\g_n$. Since the diameter of $L_{n,i}$ is $O(1/n^2)$, it follows that the Gromov-Hausdorff distance between $(\tM_{n,i},\tilde{d}_n)$ and $(\M_{n,i},d_n)$ is $O(1/n^2)$, namely,
\[
\sup_{p,q\in \M_{n,i}} \left|\tilde{d}_n(p,q) - d_n(p,q)\right| = O\brk{\frac{1}{n^2}}.
\] 
\revised{
\begin{remark}
\begin{enumerate}
\item Note that the distances inside each triangle $\M_{n,i}$ differ only by $O(1/n^2)$ from the distances of a Euclidean triangle with the same edges. 
For $n$ large enough, $\M_{n,i}$ is convex in $\M_n$, in the sense that segments between points in $\M_{n,i}$ stays inside it (that is, the induced metric on $\M_{n,i}$ from $d_n$ and its intrinsic distance are the same). 
This is because the only way that a curve between points on an edge of $\M_{n,i}$ could have been shorter than the length of the edge is by moving around a dislocation. However, since the dislocations are located at a distance of order $1/n$ from the edge and the length gain is only order $1/n^2$, such a curve would in fact be longer.
\item Since the Levi-Civita parallel transport in each $\M_{n,i}$ is path independent (this is one of the main features of the dislocation, see \cite{KM15} for details) and since the angle around each vertex of each of the triangles $\M_{n,i}$ is exactly $2\pi$, the Levi-Civita parallel transport $\nabla_n$ in the whole manifold $\M_n$ is path-independent. 
\end{enumerate}
\end{remark}}

\section{Embeddings of $\M_n$ into the \Wz\ manifold $\N$}
\label{sec:mapping}

In this section we construct embeddings $F_n:\M_n\to \N$ satisfying the conditions of \defref{def:convergence}.
We denote by $X_n$ the skeleton formed by the union of the boundaries of the triangles with defects $\tM_{n,i}$. 
Likewise, we denote by $Y_n$ the  skeleton formed by the union of the boundaries of the geodesic triangles $\N_{n,i}$.

These skeletons have the following properties:
\begin{enumerate}
\item 
The vertices of $X_n$ form a finite $O(n^{-1})$-net of $\M_n$ and the
vertices of $Y_n$ form a finite $O(n^{-1})$-net of $\N$ of the same cardinality.
This follows from the construction and in particular from the fact that $\textVol(\N\setminus\N_n)\to0$.

\item The edges in $X_n$ are curves that are of the same length as the corresponding edges in $Y_n$.
\item $Y_n$ consists of $\nabla$-geodesics and $X_n$ consists of $\nabla_n$-geodesics.  

\end{enumerate}

It follows that there exists a natural mapping $T_n: X_n\to Y_n$ that preserves the intrinsic distance of $X_n$ and $Y_n$ (the intrinsic distances on path-connected subsets differ from the induced distances $d$ and $d_n$).

Before extending $T_n$ to smooth embeddings $\M_n\to\N$, we need the following technical lemma:

\begin{lemma}
Consider the two geometric figures displayed in \figref{fig:tedious}.
Both figures exhibit a geodesic curve of length $a$, with at one end a geodesic curve of length $b$ emanating at an angle $\gamma$ and at its other end  geodesic curve of length $c$ emanating at an angle $\beta$. The figure on the left is in the Euclidean plane (i.e., the segments $pq$, $pp_1$ and $qq_1$ are Euclidean segments), whereas the figure on the right is in $\N$ (i.e., the curves $PQ$, $PP_1$ and $QQ_1$ are $\nabla$-geodesics). It is given that
\[
a,b,c \le \frac{\Lmax}{n}.
\]
Then, there exists a constant $\Delta$ which only depends on $\Kmin$, $\Kmax$, $\Lambda$ and $\Lmax$, such that for large enough $n$,
\[
|d(P_1,Q_1) - |p_1 - q_1|| \le \frac{\Delta}{n^2}.
\]

\begin{figure}
\begin{center}
\begin{tikzpicture}[scale=2.0]
	\tkzDefPoint(0,0){p}
	\tkzDefPoint(2,0){q}
	\tkzDefPoint(0.8,0.8){p1}
	\tkzDefPoint(1.25,0.6){q1}
	\tkzDrawPoints(p,q,p1,q1)
	\tkzDrawSegment(p,q);
	\tkzDrawSegment(p,p1);
	\tkzDrawSegment(q,q1);
	\tkzLabelSegment[below](p,q){$a$};
	\tkzLabelSegment[above left](p,p1){$b$};
	\tkzLabelSegment[above right](q,q1){$c$};
	\tkzLabelPoint[ left](p){$p$}
	\tkzLabelPoint[ right](q){$q$}
	\tkzLabelPoint[above](p1){$p_1$}
	\tkzLabelPoint[above](q1){$q_1$}
	\tkzMarkAngle[size=0.5](q,p,p1);
	\tkzMarkAngle[size=0.5](q1,q,p);
	\tkzLabelAngle[pos=0.33](q,p,p1){$\gamma$};
	\tkzLabelAngle[pos=0.37](p,q,q1){$\beta$};

	\tkzDefPoint(3,0){P}
	\tkzDefPoint(5,0){Q}
	\tkzDefPoint(3.8,0.8){P1}
	\tkzDefPoint(4.25,0.6){Q1}
	\tkzDrawPoints(P,Q,P1,Q1)
	\draw  (3,0) edge[bend right=30,distance=0.2cm] (5,0);
	\draw  (3,0) edge[bend right=30,distance=0.2cm] (3.8,0.8);
	\draw (5,0) edge[bend right=30,distance=0.2cm] (4.25,0.6);
	\tkzLabelPoint[left](P){$P$}
	\tkzLabelPoint[right](Q){$Q$}
	\tkzLabelPoint[above left](P1){$P_1$}
	\tkzLabelPoint[above right](Q1){$Q_1$}
	\tkzLabelSegment[below=4pt](P,Q){$a$};
	\tkzLabelSegment[above left=-1pt](P,P1){$b$};
	\tkzLabelSegment[above right=3pt](Q,Q1){$c$};
	\tkzLabelAngle[pos=0.3](P,Q,Q1){$\beta$};
	\tkzText(3.37,0.1){$\gamma$};
	\draw [domain=-7:36] plot ({3+0.5*cos(\x)}, {0.5*sin(\x)});
	\draw [domain=-7.5:48] plot ({5-0.45*cos(\x)}, {0.45*sin(\x)});

	\tkzText(1,1.5){$\R^2$};
	\tkzText(4,1.5){$\N$};
\end{tikzpicture}
\end{center}
\caption{Geometric figures consisting of geodesic curves. In both figures the respective length of the geodesics are equal as well as the respective angles between geodesics. The figure on the left is in the Euclidean plane whereas the figure on the right is in $\N$. }
\label{fig:tedious}
\end{figure}
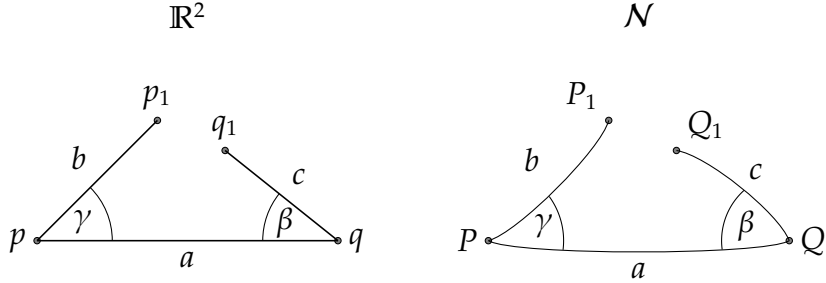

\label{lem:tedious}
\end{lemma}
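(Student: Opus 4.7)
The plan is to work in Riemannian normal coordinates (with respect to $\nabla^\g$) at a base point $O\in\N$, which I take to be the midpoint of the Riemannian segment $\sigma_{PQ}$ joining $P$ and $Q$. Within an $O(1/n)$-neighborhood of $O$ the standard expansions $g_{ij}(x)=\delta_{ij}+O(|x|^2)$ and $\Gamma^k_{ij}(x)=O(|x|)$ hold, with constants depending only on the curvature bounds of $(\N,\g)$. Two immediate consequences are (i) the $\g$-length of any curve of Euclidean length $O(1/n)$ inside the chart differs from its Euclidean length by $O(1/n^3)$, and (ii) every Riemannian segment of length $O(1/n)$ based inside the chart deviates from the corresponding Euclidean straight line by $O(1/n^3)$ in position.

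Next I would replace each of the three $\nabla$-geodesics of the right figure by the Riemannian segment connecting the same pair of endpoints. By \propref{prop:geo_length}, the prescribed lengths $a,b,c$ change by at most $O(1/n^3)$. By \propref{prop:geo_angle}, the unit tangent at an endpoint rotates by an angle $O(1/n)$. Consequently, at $P$ the $\g$-angle between $\sigma_{PQ}$ and $\sigma_{PP_1}$ equals $\gamma+O(1/n)$, and at $Q$ the $\g$-angle between $\sigma_{QP}$ and $\sigma_{QQ_1}$ equals $\beta+O(1/n)$. Since the metric at $P$ (or $Q$) differs from the Euclidean $\delta$ by $O(1/n^2)$ in the normal coordinates, Euclidean angles at these points differ from $\g$-angles by $O(1/n^2)$.

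Since $O$ is the midpoint of $\sigma_{PQ}$, the points $P$ and $Q$ lie on a common radial line at Euclidean distance $a/2+O(1/n^3)$ from the origin; after a rotation, place them on the $x$-axis. Starting from $P$ and moving along $\sigma_{PP_1}$ for a Riemannian distance $b+O(1/n^3)$ in a Euclidean direction whose angle with $PQ$ is $\gamma+O(1/n)$, property (ii) places $P_1$ at
\[
P_1=\brk{-a/2+b\cos\gamma,\,b\sin\gamma}+O(1/n^2),
\]
where the angle-vs-position contribution $b\cdot O(1/n)=O(1/n^2)$ dominates the $O(1/n^3)$ straight-line deviation. An identical argument at $Q$ gives
\[
Q_1=\brk{a/2-c\cos\beta,\,c\sin\beta}+O(1/n^2).
\]
These coincide, up to $O(1/n^2)$, with the Euclidean coordinates of $p_1$ and $q_1$ in the left figure, so $|P_1-Q_1|_{\mathrm{Euc}}=|p_1-q_1|+O(1/n^2)$. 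Invoking (i) one more time, the Riemannian distance $d(P_1,Q_1)$ agrees with this Euclidean distance up to $O(1/n^3)$, which yields the claim with a $\Delta$ depending only on $\Kmin,\Kmax,\Lambda,\Lmax$.

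The main obstacle is the careful bookkeeping of three distinct scales that appear together: $O(1/n)$ angle deviations between $\nabla$-geodesics and Riemannian segments, $O(1/n^2)$ position deviations of the endpoints, and $O(1/n^3)$ length and straight-line deviations inside the normal coordinate chart. Uniformity of the implicit constants reduces to the uniform bounds on $K^\g$ and on $|V|$ on $\N$, which are already available from the compactness of $\N$ and the smoothness of $V$ underlying the estimates of \secref{sec:geo}.
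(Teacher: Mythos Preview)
Your argument is correct, and it proceeds by a genuinely different route from the paper. The paper does \emph{not} use normal coordinates or the angle estimates of \propref{prop:geo_angle}; instead it proves a strictly stronger statement (\propref{prop:tedious}): any two arclength curves, one in $\N$ and one in $\R^2$, having the same \emph{$\nabla$-geodesic curvature function} $k(t)$, satisfy $|d(\gamma(t),\gamma(0))-|\sigma(t)-\sigma(0)||\le \Delta/n^2$. The proof is purely analytical: one works in semi-geodesic polar coordinates about $\gamma(0)$, rewrites the curve equations as a single first-order complex ODE $\dot z-\imath(k+\bar\Delta)z=1$ with $z=r(\dot r+\imath\,\varphi\dot\theta)$ and $\bar\Delta=O(1/n)$, integrates twice to obtain a closed expression for $r^2(t)$, and then compares it term by term with the Euclidean case $\bar\Delta=0$. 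The lemma follows by applying this to the concatenated path $P_1\to P\to Q\to Q_1$, whose $\nabla$-geodesic curvature is identically zero except for two delta contributions at the corners --- matching exactly the Euclidean broken line $p_1\to p\to q\to q_1$.

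The trade-off is clear. Your approach is more geometric and economical for the lemma as stated: it recycles \propref{prop:geo_length} and \propref{prop:geo_angle} and needs only the standard $g_{ij}=\delta_{ij}+O(|x|^2)$ expansion, at the cost of tracking three error scales simultaneously. The paper's ODE approach is heavier computationally but yields a result for \emph{arbitrary} curves of prescribed $\nabla$-curvature, not just piecewise $\nabla$-geodesics; it also avoids any appeal to \propref{prop:geo_angle}, since the angle information is encoded directly in the curvature function $k(t)$ and never needs to be separated out.
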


This lemma can be proved using geometric comparison theorems, but an analytical approach is shorter. In fact, we will prove a stronger result:

\begin{proposition}
\label{prop:tedious} 
Let $\gamma:[0,\ell]\to\N$, $\ell \le 3\Lmax/n$, be a curve in arclength parametrization with geodesic curvature $k(t)$ (the geodesic curvature is with respect to the connection $\nabla$). Let $\sigma:[0,\ell]\to\R^2$ be a curve in arclength parametrization with geodesic curvature $k(t)$ (with respect to the Euclidean connection). Then, there exists a constant $\Delta>0$, which only depends on $\Lmax$, $\Kmax$, $\Kmin$ and $\Lambda$, such that for every $t\in[0,\ell]$,
\[
|d(\ga(t),\ga(0)) - |\sigma(t) - \sigma(0)|| \le \frac{\Delta}{n^2}.
\]
\end{proposition}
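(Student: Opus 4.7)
My approach is to work in $\g$-normal coordinates centered at $\gamma(0)$ and compare $\gamma$ with $\sigma$ directly via their defining ODEs. Since $\ell \le 3\Lmax/n$, the trivial length bound confines $\gamma$ to a Euclidean ball of radius $O(1/n)$ around $\gamma(0)$; for $n$ large enough this lies well inside the injectivity radius of $\g$, so $\g$-normal coordinates are available on this neighborhood. Let $\hat\gamma(t)\in\R^2$ denote the coordinate representation of $\gamma$. The defining property of normal coordinates gives $d(\gamma(t),\gamma(0))=|\hat\gamma(t)|$ (Euclidean norm). After choosing Cartesian coordinates for $\sigma$ such that $\sigma(0)=0$ and $\dot\sigma(0)=\hat\gamma'(0)$, which is harmless since $|\sigma(t)-\sigma(0)|$ is rigid-motion invariant, the problem reduces to comparing $|\hat\gamma(t)|$ with $|\sigma(t)|$ as curves in $\R^2$.

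Standard expansions in $\g$-normal coordinates give $\g_{ij}=\delta_{ij}+O(1/n^2)$ and $\Gamma^k_{ij}(\g)=O(1/n)$ throughout the chart. Combined with formula \eqref{eq:V}, the Christoffel symbols of $\nabla$ in these coordinates are uniformly bounded by a constant depending only on $\Lambda$ and the smoothness of $\g$ and $V$; in particular $\Gamma^k_{ij}(\nabla)(0)=\delta_{ij}V^k(0)-V_j(0)\delta_i^k$, which is generically a nonzero $O(\Lambda)$ term. The arclength curve $\hat\gamma$, with $\nabla$-geodesic curvature $k(t)$, satisfies
\[
\hat\gamma''^{\,m} = k(t)\,N^m(\hat\gamma) - \Gamma^m_{ij}(\nabla)(\hat\gamma)\,\hat\gamma'^{\,i}\hat\gamma'^{\,j},
\]
where $N$ is the $\g$-unit normal to $\hat\gamma'$ (in the chosen orientation), while $\sigma''^{\,m}=k(t)N_E^m$ with $N_E$ the Euclidean unit normal. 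Since $\g=\delta+O(1/n^2)$ implies $N(\hat\gamma)=N_E(\hat\gamma')+O(1/n^2)$ and $N_E$ depends Lipschitz-continuously on its argument, the difference $\eta(t)=\hat\gamma(t)-\sigma(t)$ satisfies a bound of the form
\[
|\eta''(t)|\le C_1+C_2\,k(t)\,|\eta'(t)|,
\]
with constants depending only on $\Lmax,\Kmax,\Kmin,\Lambda$ (together with an implicit a priori bound on $k$, available in the applications, e.g.\ $k\le\Lambda$ when $\gamma$ is a $\nabla$-geodesic, by \eqref{eq:kappa}).

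With $\eta(0)=\eta'(0)=0$ and $\ell=O(1/n)$, integrating once and applying Gronwall's inequality gives $|\eta'(t)|\le C_1 t\exp\bigl(C_2\int_0^t k\bigr)=O(1/n)$, and a second integration yields $|\eta(t)|=O(1/n^2)$. Hence
\[
\bigl|\,d(\gamma(t),\gamma(0))-|\sigma(t)-\sigma(0)|\,\bigr|=\bigl|\,|\hat\gamma(t)|-|\sigma(t)|\,\bigr|\le|\eta(t)|\le \frac{\Delta}{n^2}.
\]
The main technical point, and the reason the bound is $O(1/n^2)$ rather than the $O(1/n^3)$ one might naively expect from identical curvature profiles, is that $\Gamma^m_{ij}(\nabla)$ does \emph{not} vanish at the origin of a $\g$-normal chart: the $V$-induced correction is a genuine $O(1)$ source term which, integrated twice over an interval of length $O(1/n)$, produces exactly the $O(1/n^2)$ discrepancy between $\hat\gamma$ and $\sigma$ that saturates the claimed bound. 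Without the $V$-correction (i.e.\ if $\nabla=\nabla^\g$) one would recover the sharper $O(1/n^3)$ behavior of \corrref{cor:dist_geo_seg}.
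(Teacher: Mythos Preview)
Your approach is correct and genuinely different from the paper's. The paper works in semi-geodesic \emph{polar} coordinates around $\gamma(0)$, writes the curve equations as a single complex first-order ODE $\dot z - i(k+\bar\Delta)z = 1$ with $z = r(\dot r + i\varphi\dot\theta)$, and integrates explicitly to obtain the closed formula
\[
r^2(t) = \Bigl(\int_0^t \cos\xi\Bigr)^2 + \Bigl(\int_0^t \sin\xi\Bigr)^2,\qquad \xi(t)=\int_0^t k + \int_0^t \bar\Delta,
\]
from which the estimate follows by comparing with the Euclidean case $\bar\Delta\equiv 0$. Your route---normal Cartesian coordinates, direct subtraction $\eta=\hat\gamma-\sigma$, and Gronwall---is more elementary and avoids the algebra of the complex reformulation. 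The trade-off is robustness: the paper's argument never needs to control $k$ itself, only the bounded perturbation $\bar\Delta$ (the $k$-dependence lives entirely inside $\cos\xi_0,\sin\xi_0$, which are trivially bounded). Your Gronwall step requires $\int_0^t|k|$ to stay bounded, which you acknowledge; this is harmless for the application to \lemref{lem:tedious}, where the curves are piecewise $\nabla$-geodesic with at most two corners, but it is an extra hypothesis in the general statement. A minor point: your claim that $\Delta$ depends only on $\Lmax,\Kmax,\Kmin,\Lambda$ leans on uniform control of the metric and its first derivatives in normal coordinates; this is obtainable from curvature bounds via comparison geometry, but as written you are implicitly using compactness of $\N$, which is of course sufficient here.
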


The proof is given in Appendix~\ref{sec:tedious}.

\begin{proposition}
\label{prop:Fn}
There exist smooth embeddings $F_n: \M_n\to\N$ satisfying the following properties:
\begin{enumerate}
\item $F_n$ extends $T_n:X_n\to Y_n$.
\item The image of  $\M_{n,i}$ is a subset of $\N_{n,i}$ for all $i\in I_n$.
\item $\textVol(\N\setminus F_n(\M_n))\to 0$.
\item $dF_n$ and $dF_n^{-1}$ are uniformly bounded in $n$ over their domains of definition.
\item $\dist(dF_n,\SO{\g_n,\g}) = O(1/n)$ everywhere except for a set of volume $O(1/n^{1-\ep})$, where $0<\ep<1$ can be chosen arbitrarily.
\end{enumerate}
\end{proposition}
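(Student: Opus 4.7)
The plan is to build $F_n$ separately on each geodesic triangle $\M_{n,i}$, forcing agreement with $T_n$ on the boundary skeleton $X_n$, and then to smooth across the edges. This automatically yields Items 1 and 2. Item 3 will follow from the inclusion $\N \setminus F_n(\M_n) \subset (\N \setminus \N_n) \cup (\text{small uncovered disks near dislocations})$ together with \eqref{eq:N_minus_N_n}, while Items 4 and 5 will follow from uniform derivative bounds on each piece.

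For the single-triangle map I would use two chart models. On the target side, I pick a vertex $p_i$ of $\N_{n,i}$ and consider the Riemannian normal chart $\phi_i := (\exp^{\g}_{p_i})^{-1}$; on a ball of radius $O(1/n)$ the pulled-back metric $\phi_i^{*}\g$ differs from the Euclidean metric in $C^1$ by $O(1/n^2)$, so $\tilde\N_{n,i} := \phi_i(\N_{n,i}) \subset \R^2$ is a triangular region whose curved edges (images of $\nabla$-geodesics) are $O(1/n^2)$-close to Euclidean segments. On the source side, cutting $\M_{n,i}$ along its dislocation line produces a simply-connected locally-Euclidean polygon which admits an isometric developing map $D_{n,i}$ onto a Euclidean triangle $U_{n,i} \subset \R^2$ of side lengths $a_{n,i}, b_{n,i}, c_{n,i}$ and Euclidean angles $\alpha^E_{n,i}, \beta^E_{n,i}, \gamma^E_{n,i}$. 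By \corrref{cor:triangle}, these Euclidean angles differ from the true angles $\alpha_{n,i}, \beta_{n,i}, \gamma_{n,i}$ by $O(1/n)$, so the unique affine map $A_i$ sending the vertices of $U_{n,i}$ to those of $\tilde\N_{n,i}$ satisfies $\dist(dA_i, \SO{2}) = O(1/n)$. After composing with a small diffeomorphism $B_i$ of $\R^2$ identifying $A_i(U_{n,i})$ with $\tilde\N_{n,i}$ and satisfying $\dist(dB_i, \id) = O(1/n)$, the map
\[
F_{n,i} := \phi_i^{-1} \circ B_i \circ A_i \circ D_{n,i}
\]
is a smooth embedding of the cut triangle into $\N_{n,i}$ whose derivative has distortion $O(1/n)$ relative to $\SO{\g_n,\g}$ away from the dislocation.

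The main obstacle is to promote this to a smooth embedding of the \emph{uncut} $\M_{n,i}$ and to glue neighboring pieces across $X_n$, both while keeping the total bad volume within the $O(1/n^{1-\ep})$ budget of Item 5. I would thicken the removed dislocation neighborhood $L_{n,i}$ to a disk $\tilde L_{n,i}$ of radius $r_n = n^{-(3-\ep)/2}$ and redefine $F_{n,i}$ on $\tilde L_{n,i}$ by a smooth radial interpolation that absorbs the dislocation gap $\e = O(1/n^2)$ over the scale $r_n \gg 1/n^2$; this keeps $dF_{n,i}$ uniformly bounded throughout. Similarly, across each edge of $X_n$ I smooth the match between adjacent triangle maps using a partition of unity supported in a tubular collar of width $n^{-2+\ep/2}$; since the two maps already agree pointwise on the edge and their derivatives differ there by $O(1/n)$, the smoothing preserves the uniform derivative bounds. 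The total area of thickened dislocations is $O(n^2 r_n^2) = O(n^{-1+\ep})$ and that of the edge collars is $O(n^2 \cdot n^{-1} \cdot n^{-2+\ep/2}) = O(n^{-1+\ep/2})$, both comfortably within the allowed budget. Items 3 and 4 are then immediate from the uniform bounds produced.
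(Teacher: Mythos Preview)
Your approach differs from the paper's and, as written, has a genuine gap in the developing-map step.

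The paper does not use Riemannian normal charts. It partitions each $\M_{n,i}$ by a buffer set $K_{n,i}$ of width $O(n^{-2+\ep})$ into three pieces, each containing one vertex $q$ of $X_n$, and on the piece near $q$ sets $F_n(p)=\exp^\nabla_{T_n(q)}(v(p))$, where $v(p)\in T_{T_n(q)}\N$ has the same length and angle relative to $Y_n$ as $p-q$ has relative to $X_n$. Because the edges of $X_n$ are $\nabla_n$-geodesics and those of $Y_n$ are $\nabla$-geodesics, the $\nabla$-exponential map sends one to the other \emph{by arc length}, so $F_n|_{X_n}=T_n$ holds automatically outside $K_n$. \lemref{lem:tedious} then shows that the three vertex charts within one triangle land in regions of $\N_{n,i}$ still separated by $O(n^{-2+\ep})$, allowing a smooth extension across $K_n$ with uniformly bounded derivative while keeping $F_n|_{X_n}=T_n$.

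Your map $D_{n,i}$ cannot exist as stated: an isometric development preserves vertex angles, and by \propref{prop:edge_dis} the angles of $\M_{n,i}$ are $\alpha_{n,i},\beta_{n,i},\gamma_{n,i}$, generically \emph{not} the Euclidean angles $\alpha^E_{n,i},\beta^E_{n,i},\gamma^E_{n,i}$ determined by the side lengths. The developed outer boundary is three segments meeting at the angles $\alpha,\beta,\gamma$ and therefore does not close up---the translational mismatch is precisely the Burgers vector of size $O(1/n^2)$---so there is no Euclidean triangle $U_{n,i}$ on which to define $A_i$. Even after you patch this, the composition $\phi_i^{-1}\circ B_i\circ A_i\circ D_{n,i}$ is not arc-length-preserving on edges: $A_i$ is affine but not orthogonal, and $\phi_i^{-1}=\exp^\g_{p_i}$ preserves arc length only along radial segments. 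Hence $F_{n,i}|_{\partial\M_{n,i}}$ is only an $O(1/n)$-perturbation of $T_n$, not $T_n$ itself, and adjacent triangle maps do not actually agree on their shared edge before your collar smoothing. All of this is repairable, but the repairs are exactly what the paper's choice of $\exp^\nabla$ (rather than $\exp^\g$) is designed to avoid.
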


The lower the constant $\ep$ is, the faster is the convergence of $\M_n$ to $\N$. With a more elaborate construction we can obtain the same result with $\ep=0$.
Since the convergence rate is immaterial for our main result, we will make do with $\ep>0$.

\begin{proof}
For every large enough $n\in\mathbb{N}$  and for every $i\in I_n$ we construct a  compact set $K_{n,i}\subset\tM_{n,i}$ as shown in \figref{fig:Fnconstruct}, such that 
\begin{enumerate}
\item $\textVol(K_{n,i}) = O(1/n^{3-\ep})$.
\item The intersection of $K_{n,i}$ with $X_n$ consists of three segments.
\item Distances between different connected components of $\pl K_{n,i}\setminus \pl \M_{n,i}$ are between $1/n^{2-\ep}$ and $2/n^{2-\ep}$. 
\item If $\M_{n,i}$ and $\M_{n,j}$ share an edge, then the intersections of $K_{n,i}$ and $K_{n,j}$ with this edge coincide, and the boundary of $K_{n,i}\cup K_{n,j}$ is smooth.
\end{enumerate}

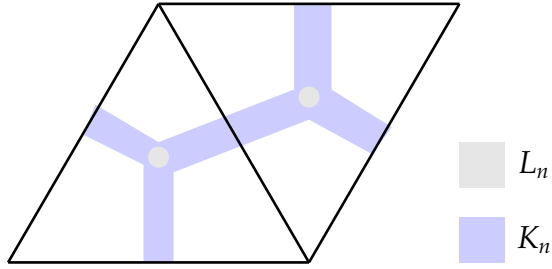
\begin{figure}
\begin{center}
\begin{tikzpicture}[scale=2]
	\tkzDefPoint(0,0){A};
	\tkzDefPoint(2,0){B};
	\tkzDefPoint(1,1.72){C};
	\tkzDefPoint(3,1.72){E};
	\tkzDefPoint(1,0.7){D};
	\tkzDefPoint(2,1.1){F};
	\fill [color=blue!20] (0.58,1.05) -- (1,0.8) -- (1.9,1.15) -- (1.9,1.72) -- (2.15,1.72) -- (2.15, 1.15) --(2.55,0.9) -- (2.42,0.72)  -- (2,0.95) -- (1.1,0.6) -- (1.1,0) -- (0.9,0) -- (0.9,0.65) -- (0.5,0.85) -- cycle;

	\tkzFillCircle[R, fill=gray!20, line width=0pt](D,0.07 cm)
	\tkzFillCircle[R, fill=gray!20, line width=0pt](F,0.07 cm)
	\tkzDrawPoints(A,B,C,E);
	\draw [line width=1pt] (0,0) edge[bend left=30,distance=0.cm] (2,0);
	\draw [line width=1pt] (0,0) edge[bend left=30,distance=0.cm] (1,1.72);
	\draw [line width=1pt] (2,0) edge[bend left=30,distance=0.cm] (1,1.72);
	\draw [line width=1pt] (1,1.72) edge[bend left=30,distance=0.cm] (3,1.72);
	\draw [line width=1pt] (2,0) edge[bend left=30,distance=0.cm] (3,1.72);

	\fill [color=blue!20] (3,0) -- (3.3,0) -- (3.3,0.3) -- (3,0.3) -- cycle;
	\fill [color=gray!20] (3,0.5) -- (3.3,0.5) -- (3.3,0.8) -- (3,0.8) -- cycle;
	\node at (3.5,0.15) {$K_n$};
	\node at (3.5,0.65) {$L_n$};
\end{tikzpicture}
\end{center}
\caption{Partition of $\tM_{n}$ used in the construction of $F_n$. The dislocation lines are surrounded by open neighborhoods $L_{n,i}$ of radius $O(1/n^2)$ whose union is  denoted by $L_n$; we denote $\M_{n,i} = \tM_{n,i}\setminus L_{n,i}$. Each $\M_{n,i}$ is partitioned into three connected components by constructing the set $K_{n}$. }
\label{fig:Fnconstruct}
\end{figure}

Set $K_n = \cup_{i\in I_n} K_{n,i}$. We define $F_n$ on $\M_n\setminus K_n$ as  follows: let $p\in \M_n\setminus K_n$, then by construction the connected component of $\M_n\setminus K_n$ that contains $p$ contains a single vertex $q_p$ of the grid $X_n$. 
Define
\[
F_n(p) = \exp^\nabla_{T_n(q_p)}(v(p)),
\]
where 
$v(p)\in T_{T_n(q_p)}\N$ is the vector of length $|p-q_p|$ which forms with $Y_n$ at $T_n(q_p)$ the same angle as $p-q_p$ forms with $X_n$ at $q_p$.  

Since the injectivity radius of $\exp^\nabla$ is at least $\Llam^{-1}(L(\Kmax,\Lambda))$  (\propref{prop:geo_exist}), it follows that $F_n$ is injective on each connected component of $\M_n\setminus K_n$. 
We have to show that $F_n$ is globally injective: it maps disjoint connected components of $\M_n\setminus K_n$ into disjoint sets in $\N$, and moreover, the separation between the images remains of order $1/n^{2-\ep}$. This is an immediate consequence of  
Lemma~\ref{lem:tedious}, which states that $F_n$ distorts paths in $\M_{n,i}$ by distances of order $O(1/n^2)$; in the application of Lemma~\ref{lem:tedious} the segment $pq$ corresponds to an edge of $\M_{n,i}$, whereas the geodesic between $P$ and $Q$ corresponds to the matching edge in $\N_{n,i}$. Note that the fact that the domain in Lemma~\ref{lem:tedious} is $\R^2$ and not $\M_{n,i}$ does not matter since the difference in distances is only $O(n^{-2})$ by construction.
Thus, $F_n$ is injective and the distance between the images of the connected components of $\M_{n,i}\setminus K_{n,i}$ is of order $1/n^{2-\ep}$. 

We turn to consider the derivative of $F_n$. Since $q=q_p$ is independent of $p$ in any connected component,
\[
dF_n = d(\exp^\nabla_{T_n(q)}) \circ dv.
\] 
Note that by definition $dv_p \in\SO{\g_n,\g}$ for every $p$. Also, 
\[
d(\exp^\nabla_{T_n(q)})_{0} = \id.
\]
Since $|v| = O(1/n)$ and $\exp^\nabla$ and its derivatives only depend on metric $\g$ and the vector field $V$ defining $\nabla$, we conclude that 
\[
\dist(d(\exp^\nabla_{T_n(q)})_v,\id) = O\brk{\frac{1}{n}},
\] 
hence
\[
\dist(dF_n,\SO{\g_n,\g}) = O\brk{\frac{1}{n}},
\]
and the same estimate holds for $dF_n^{-1}$.
An immediate consequence is $|dF_n|,|dF_n^{-1}| = 1+ O(1/n)$; in particular $F_n$ are uniformly bi-Lipschitz.

We next extend $F_n$ to $\M_n$, that is, we define $F_n$ on $K_{n,i}$.
$F_n$ maps  $\pl K_{n,i}\setminus \pl\M_{n,i}$ with an $O(1/n^2)$ distortion and a uniformly bounded derivative. Since we have shown that the dimensions of the domains and images are of the same order, it follows that $F_n$ can be extended smoothly to an embedding $\M_n$ with a uniformly bounded derivative, and with the constraint that $F_n|_{X_n} = T_n$. 

It remains to prove that $\textVol(\N\setminus F_n(\M_n))\to 0$. Note that
\[
\textVol(\N\setminus F_n(\M_n)) = \textVol(\N\setminus\N_n) + \textVol(\N_n\setminus F_n(\M_n)).
\]
The first term tends to zero by \propref{prop:triangulation}. For the second term, we note that the length of $\pl L_{n,i}$ is $O(1/n^2)$. Since $dF_n$ is uniformly bounded, $
F_n( \pl L_{n,i})$ is also a curve whose length is $O(1/n^2)$, hence, by the isoperimetric inequality for manifolds with lower-bounded gaussian curvature, its volume is $O(1/n^4)$. Hence,
\[
\textVol(\N_n\setminus F_n(\M_n)) = O(1/n^2).
\]
\end{proof}

\begin{corollary}[Mean asymptotic rigidity]
\label{corr:rigidity}
There embeddings $F_n:\M_n\to\N$ satisfy 
for every $p\in(1,\infty)$,
\[
\int_{F_n(\M_n)} \dist{^p}(dF_n^{-1},\SO{\g,\g_n}) \,\Volume \to 0.
\]
\end{corollary}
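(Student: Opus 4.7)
The plan is to reduce the claim to the pointwise estimates already contained in \propref{prop:Fn}, by splitting $F_n(\M_n)$ into a ``good'' set where the distortion is small and a ``bad'' set whose volume is vanishing but on which we only have a uniform (not small) bound.

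First I would observe that the quantitative rigidity estimate in item~5 of \propref{prop:Fn} is stated for $dF_n$ relative to $\SO{\g_n,\g}$, while the corollary asks about $dF_n^{-1}$ relative to $\SO{\g,\g_n}$. These are equivalent up to constants in the following sense: if $R\in \SO{\g_n,\g}$, then $R^{-1}\in \SO{\g,\g_n}$; and if $dF_n = R + E$ with $\|E\|$ small and $|dF_n|,|dF_n^{-1}|$ uniformly bounded (item~4), then
\[
dF_n^{-1} - R^{-1} = -R^{-1} E\, dF_n^{-1},
\]
so $\dist(dF_n^{-1},\SO{\g,\g_n}) \le C\,\dist(dF_n,\SO{\g_n,\g})$ for some constant $C$ independent of $n$, and a uniform upper bound on $\dist(dF_n^{-1},\SO{\g,\g_n})$ follows from the uniform boundedness of $dF_n^{-1}$.

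Next I would fix $\ep\in(0,1)$ and partition $F_n(\M_n) = G_n \sqcup B_n$, where $G_n$ is the ``good'' set on which $\dist(dF_n,\SO{\g_n,\g}) = O(1/n)$ and $B_n$ is its complement. By item~5 of \propref{prop:Fn}, $\textVol_\g(B_n) = O(1/n^{1-\ep})$. On $G_n$, the previous paragraph gives $\dist(dF_n^{-1},\SO{\g,\g_n}) = O(1/n)$, so
\[
\int_{G_n} \dist^p(dF_n^{-1},\SO{\g,\g_n}) \,\Volume \le \frac{C_p}{n^p}\,\textVol_\g(\N) \to 0.
\]
On $B_n$, using the uniform bound $\dist(dF_n^{-1},\SO{\g,\g_n}) \le C$ from item~4,
\[
\int_{B_n} \dist^p(dF_n^{-1},\SO{\g,\g_n}) \,\Volume \le C^p \cdot \textVol_\g(B_n) = O\!\brk{\frac{1}{n^{1-\ep}}} \to 0.
\]
Adding the two contributions yields the result for every $p\in(1,\infty)$.

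There is no substantive obstacle here; the whole point of formulating item~5 of \propref{prop:Fn} with a controlled ``bad'' set of vanishing volume was precisely to enable this kind of $L^p$ estimate by a good-set/bad-set split. The only mildly technical point, which I would address briefly in the write-up, is the passage from rigidity of $dF_n$ to rigidity of $dF_n^{-1}$, which rests on the uniform bi-Lipschitz bound from item~4.
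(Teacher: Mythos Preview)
Your proof is correct and follows exactly the approach the paper has in mind; the paper's own proof is a one-liner (``This is an immediate consequence of Items~4 and 5 in \propref{prop:Fn}''), and you have simply spelled out the good-set/bad-set split that this sentence encodes. Your explicit discussion of the passage from $\dist(dF_n,\SO{\g_n,\g})$ to $\dist(dF_n^{-1},\SO{\g,\g_n})$ is a nice touch---the paper actually records in the proof of \propref{prop:Fn} that ``the same estimate holds for $dF_n^{-1}$'', so this step is already handled there, but your derivation via $dF_n^{-1}-R^{-1}=-R^{-1}E\,dF_n^{-1}$ is a clean justification.
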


\begin{proof}
This is an immediate consequence of Items~4 and 5 in \propref{prop:Fn}.
\end{proof}

\section{Gromov-Hausdorff convergence}
\label{sec:metric}

In this section we prove that $F_n$ satisfies the vanishing distortion property in \defref{def:convergence}, i.e. that $\dis F_n\to 0$. We then deduce that $(\M_n,d_n)$ Gromov-Hausdorff (GH) converges to $(\N,d)$ as a sequence of compact metric spaces. Recall that $d$ is the distance function on $\N$ induced by the metric $\g$.

\begin{proposition}
\label{prop:disF_n}
\[
\dis F_n = \max_{p,q\in \M_n} | d_n(p,q) - d(F_n(p),F_n(q)) | = \Oof{1-\ep}.
\]
\end{proposition}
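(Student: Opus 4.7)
\emph{Proof proposal.} The plan is to establish both inequalities
\[
d(F_n(p),F_n(q)) \leq d_n(p,q) + \Oof{1-\ep} \textand d_n(p,q) \leq d(F_n(p),F_n(q)) + \Oof{1-\ep}
\]
for all $p,q\in\M_n$; taking the maximum over such pairs then yields $\dis F_n = \Oof{1-\ep}$.

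For the upper bound, let $\ga:[0,L]\to\M_n$ be an arclength-parametrized minimizing $d_n$-geodesic from $p$ to $q$, so $L=d_n(p,q)\le \diam(\M_n)=O(1)$. Then $d(F_n(p),F_n(q))\le L(F_n\circ\ga)$, and a standard linear-algebra estimate gives
\[
L(F_n\circ\ga) - L \le \int_0^L \dist\brk{dF_n|_{\ga(t)},\SO{\g_n,\g}}\,dt.
\]
We split the integral according to the partition $\M_n=(\M_n\setminus K_n)\cup K_n$ of \propref{prop:Fn}. By Item~5 of that proposition the integrand is $O(1/n)$ on the good region, contributing at most $O(L/n)=O(1/n)$; by Item~4 it is uniformly bounded on the bad region $K_n$, contributing a term of order $L(\ga\cap K_n)$.

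The key step is therefore to show $L(\ga\cap K_n)=\Oof{1-\ep}$. The set $K_n$ consists of thin strips of thickness $O(1/n^{2-\ep})$ situated along the edges of the skeleton $X_n$ (\figref{fig:Fnconstruct}). Inside each triangle $\M_{n,i}$ the ambient geometry is flat, so the restriction of $\ga$ to $\M_{n,i}$ is a Euclidean segment, possibly with a short detour of length $\Oof{2}$ around the dislocation hole $L_{n,i}$. Such a segment crosses each thin strip transversely in length $O(1/n^{2-\ep})$, and since $\ga$ meets $O(nL)=O(n)$ triangles along its way, the total is $L(\ga\cap K_n)=O(n)\cdot O(1/n^{2-\ep})=\Oof{1-\ep}$, as claimed. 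This yields the first inequality.

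For the reverse inequality, take a minimizing $d$-geodesic $\tau$ in $\N$ from $F_n(p)$ to $F_n(q)$. One must cope with the possibility that $\tau$ leaves $F_n(\M_n)$; however, by \propref{prop:Fn}, $\N\setminus F_n(\M_n)$ consists of an $O(1/n)$-wide boundary strip together with small interior regions of total volume $\Oof{2}$, so $\tau$ can be deformed to stay inside $F_n(\M_n)$ at negligible length cost. Pulling back via the uniformly bi-Lipschitz $F_n^{-1}$ produces a path from $p$ to $q$ in $\M_n$ to which exactly the same length-integral analysis applies, yielding the reverse inequality. The main obstacle throughout is the bound $L(\ga\cap K_n)=\Oof{1-\ep}$: one must rule out the possibility that $\ga$ runs parallel to a strip for length much greater than the strip's thickness, which can be done by a minimality argument that straightens such a segment into the adjacent flat interior, contradicting $\ga$'s optimality.
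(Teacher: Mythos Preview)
Your approach has a genuine gap at exactly the point you yourself flag as ``the main obstacle'': the bound $L(\ga\cap K_n)=\Oof{1-\ep}$. Your proposed fix --- a minimality argument that ``straightens such a segment into the adjacent flat interior'' --- does not work. The set $K_{n,i}$ is a subset of the \emph{flat} triangle $\M_{n,i}$; it carries no special geometry. If a $d_n$-minimizing geodesic happens to run as a Euclidean segment along one of the arms of $K_{n,i}$ for length of order $1/n$, that segment is already locally shortest and there is nothing to straighten. Minimality gives you no leverage to push $\ga$ out of $K_n$. (You also seem to picture $K_n$ as thin collars along the skeleton $X_n$; in fact the $K_{n,i}$ are Y-shaped strips in the interior of each triangle, reaching from the dislocation region to the three edges --- see \figref{fig:Fnconstruct}.) A secondary issue is that you assume $\diam(\M_n)=O(1)$ at the outset; a priori this is not known and is essentially part of what must be proved.

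The paper sidesteps both difficulties by a different decomposition. Instead of integrating $\dist(dF_n,\SO{\g_n,\g})$ along a single minimizing path, it first proves a \emph{per-triangle} distortion bound (\lemref{lm:2}): within each $\M_{n,i}$ the distortion of $F_n$ is $O(n^{-2+\ep})$. The point is that in a single triangle one may replace the minimizing path by a ``short'' path with $O(n^{-2+\ep})$ slack, and use that slack to \emph{choose} a path whose intersection with $K_{n,i}$ has length $O(n^{-2+\ep})$; no minimality argument is needed. Then a separate counting lemma (\lemref{lm:3}) shows that shortest paths in either $\N$ or $\M_n$ cross $O(n)$ triangles; its proof uses the per-triangle bound to first establish $\diam(\M_n)=O(1)$, avoiding the circularity in your argument. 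Multiplying the per-triangle distortion by the triangle count gives the $\Oof{1-\ep}$ estimate. If you want to salvage your route, the natural repair is precisely this: allow yourself $O(n^{-2+\ep})$ of slack per triangle to perturb $\ga$ off the arms of $K_n$ --- at which point you are essentially reproducing the paper's argument.
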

 
\begin{proof}
The proof relies on two lemmas.
The first lemma shows that the restriction of $F_n$ to a single triangle $\M_{n,i}$
has a distortion of order $O(n^{-2+\ep})$:

\begin{lemma}
\label{lm:2}
Let $F_{n,i}$ be the restriction of $F_n$ to $\M_{n,i}$. Then there exists a constant $c>0$, independent of $n$ and $i$, such that
\[
max_{p,q\in \M_{n,i}} \left|d_n(p,q) - d(F_{n,i}(p),F_{n,i}(q))\right| < \frac{c}{n^{2-\ep}}.
\]
\end{lemma}

\begin{proof}
Let $p,q\in \M_{n,i}$ and let $\gamma$ be a ``short" path in $\M_{n,i}$ between $p$ and $q$ in the sense that
\[
\ell(\gamma) \le d_n(p,q) + O\brk{\frac{1}{n^{2-\ep}}}.
\] 
The extra $O(n^{-2+\ep})$ term in the length of $\gamma$ enables us to choose $\gamma$ such that its intersection with $K_{n,i}$ is of length $O(n^{-2+\ep})$.

Let $\sigma = F_n(\gamma)$ be the image of $\ga$ in $\N_{n,i}$, which connects $F_n(p)$ and $F_n(q)$. Then,
\[
d(F_n(p),F_n(q)) \le \ell(\sigma) \le \ell(\gamma) + O\brk{\frac{1}{n^{2-\ep}}} \le 
d_n(p,q) + O\brk{\frac{1}{n^{{2-\ep}}}}.
\]
The middle inequality follows from the bounds $\ell(\gamma) = O(1/n)$ and $\ell(\gamma\cap K_{n,i}) = O(n^{-2+\ep})$, and from
the fact that $dF_n$ does not increase length by more than a factor of $O(1/n)$ in $\M_n\setminus K_{n,i}$ and at most by a factor of $O(1)$ in $K_{n,i}$.

A similar argument holds in the other direction, by choosing $\sigma$ to be a ``short" path in $\N_{n,i}$ between $F_n(p)$ and $F_n(q)$ in the sense that
\[
\ell(\sigma) \le d(F_n(p),F_n(q)) + O\brk{\frac{1}{n^{2-\ep}}}.
\] 
This time we use the extra $O(n^{-2+\ep})$ term to consider curves \revised{inside of $\N_{n,i}$ (unlike $\M_{n,i}$, $\N_{n,i}$ is not convex -- a segment between $p$ and $q$ may leave $\N_{n,i}$, but by considering only curves inside $\N_{n,i}$ the length gain is only $O(1/n^2)$),} to avoid $\N_{n,i}\setminus F_n(\M_{n,i})$ (where $F_n^{-1}$ is not defined) and guarantee that the intersection of $\sigma$ with $F_n(K_{n,i})$ is of length $O(n^{-2+\ep})$ 
\end{proof}

The second lemma bounds the number of triangles intersected by a length-minimizing curve, thus allowing to estimate the accumulated distortion along such a curve:

\begin{lemma}
\label{lm:3}
For every  $n\in\mathbb{N}$ and $p,q\in \N_n$, the shortest path in $\N$ connecting $p$ and $q$ intersects  $O(n)$ geodesic triangles $\N_{n,i}$. 
Likewise, for every (large enough) $n\in\mathbb{N}$ and $p,q\in \M_n$, the shortest path in $\M_n$ connecting $p$ and $q$ intersects  $O(n)$  of the $\M_{n,i}$'s. 
\end{lemma}

\begin{proof}
Let $p,q\in \N_n$, and let $\sigma$ be a segment connecting them.
Let $r$ be defined as in \corrref{cor:vertex_edge_dist}, that is each edge in the triangulation is of distance larger than $r/n$ from an opposite edge. 
The lower bound on the angles in \propref{prop:triangulation} implies that each vertex in $Y_n$ is surrounded by at most $2\pi/\delta$ triangles.
Therefore, a ball of radius $r/2n$ intersects at most $2\pi/\delta$ triangles. 
\revised{Therefore, a ball of radius $r/2n$ around a point $p'\in \N_n$ does not intersect more triangles than the total triangles that balls of radius $r/2n$ around the four vertices of its (two) adjacent triangles intersect, that is $8\pi/\delta$.} 
Since the length of $\sigma$ is at most the diameter of $\N$, it follows that $\sigma$ intersects at most $\revised{16}\pi n \diam(\N)/r\delta$ triangles.

Let now $p,q\in \M_n$. Let $\sigma$ be the shortest path in $\N$ connecting $F_n(p)$ and $F_n(q)$. It intersects $O(n)$ geodesic triangles. Denote by $F_n(p)=t_0,t_1,\dots,t_k=F_n(q)$ the points in $\sigma \cap Y_n$. By Lemma~\ref{lm:2}, 
\[
\begin{split}
d_n(p,q) &\le \sum_{j=1}^k d_n\brk{F_n^{-1}(t_{j-1}),F^{-1}_n(t_{j})}
\le \sum_{j=1}^k \brk{d(t_{j-1},t_j)+\frac{c}{n^{2-\ep}}} \\
&= d(F_n(p),F_n(q)) + O(1/n^{1-\ep}) < \diam(\D)+ O(1/n^{1-\ep}),
\end{split}
\]
hence the diameter of $\M_n$ is bounded uniformly in $n$, $\diam(\M_n)\le K$.

A similar reasoning now applies: since, by Lemma~\ref{lm:2}, distances in $\M_{n,i}$ and $\N_{n,i}$ differ only by $O(n^{-2+\ep})$, any vertex in $X_n$ is of distance greater than $r/n$ from an opposite edge, hence any curve of length less than $K$ intersects at most $\revised{16}\pi n K/r\delta$ triangles.
\end{proof}

Thus the distortion of $F_n$ is $O(n)\cdot O(n^{-2+\ep}) = O(n^{-1+\ep})$, which completes the proof of \propref{prop:disF_n}. See the proof of Theorem~3.1 in \cite{KM15} for a similar argument. 
\end{proof}

\begin{corollary}
\label{cor:GH}
Let $(\M_n,d_n)$ be the sequence of compact metric spaces defined in Section~\ref{sec:Mn}. Then,
$(\M_n,d_n)$ GH converges to $(\N,d)$.
\end{corollary}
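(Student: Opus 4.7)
The plan is to apply the standard criterion for Gromov-Hausdorff convergence: if there is a map $f: X \to Y$ between compact metric spaces with distortion $\dis f \le \varepsilon$ whose image $f(X)$ is $\delta$-dense in $Y$, then $d_{GH}(X, Y) \le \dis f + 2\delta$. Since $\dis F_n \to 0$ by \propref{prop:disF_n}, it suffices to show that $F_n(\M_n)$ is asymptotically dense in $\N$, namely, that every point of $\N$ lies within $o(1)$ of $F_n(\M_n)$. Note that the volume estimate in Item~3 of \propref{prop:Fn} is not directly enough, since small volume does not preclude thin long strips with large Hausdorff distance.

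To establish density, I would decompose $\N \setminus F_n(\M_n) = (\N \setminus \N_n) \cup (\N_n \setminus F_n(\M_n))$ and bound each part. For the first piece, \propref{prop:triangulation} gives $\N \setminus \N_n \subset \Ball(\partial \N, 3/n)$, while by construction $\partial \N_n$ is a union of edges of the geodesic skeleton $Y_n$ lying in $\Ball(\partial \N, 2/n)$. Since $F_n$ extends $T_n$ (\propref{prop:Fn}, Item~1), the entire skeleton $Y_n$ is contained in $F_n(X_n) \subset F_n(\M_n)$; in particular $\partial \N_n \subset F_n(\M_n)$, so every point of $\N \setminus \N_n$ is within distance $5/n$ of $F_n(\M_n)$. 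For the second piece, $\N_n \setminus F_n(\M_n) = \bigcup_{i \in I_n} \bigl(\N_{n,i} \setminus F_n(\M_{n,i})\bigr)$, and each component is bounded by $F_n(\partial L_{n,i})$, a curve of length $O(1/n^2)$ (since $\partial L_{n,i}$ has length $O(1/n^2)$ and $dF_n$ is uniformly bounded by Item~4 of \propref{prop:Fn}); hence each such component has diameter $O(1/n^2)$ and so is contained in an $O(1/n^2)$-neighborhood of its bounding curve, which lies in $F_n(\M_n)$. Altogether $F_n(\M_n)$ is $O(1/n)$-dense in $\N$.

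To conclude, define the correspondence
\[
R_n = \bigl\{(p, q) \in \M_n \times \N : d(F_n(p), q) \le C/n \bigr\}
\]
for an appropriately large constant $C$ coming from the density estimate above. The density argument guarantees that for every $q \in \N$ there exists $p \in \M_n$ with $(p,q) \in R_n$, while for every $p \in \M_n$ one has $(p, F_n(p)) \in R_n$; thus $R_n$ is a correspondence. For any $(p,q), (p',q') \in R_n$, the triangle inequality yields
\[
|d_n(p,p') - d(q,q')| \le \dis F_n + d(F_n(p),q) + d(F_n(p'),q') \le \dis F_n + \frac{2C}{n} = O\bigl(1/n^{1-\varepsilon}\bigr).
\]
Hence $\dis R_n \to 0$, so $d_{GH}(\M_n, \N) \le \tfrac{1}{2} \dis R_n \to 0$, proving the claim.

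There is no real obstacle: the proof is essentially an assembly of already-established facts. The only subtlety worth verifying carefully is the inclusion $\partial \N_n \subset F_n(\M_n)$, which relies on $F_n$ extending $T_n$ on the skeleton; and the control on the diameter (not merely area) of each component of $\N_{n,i} \setminus F_n(\M_{n,i})$, which follows from the isoperimetric-type observation that a region bounded by a curve of length $O(1/n^2)$ in a manifold with bounded curvature has diameter $O(1/n^2)$.
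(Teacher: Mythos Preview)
Your proof is correct, but it takes a different route from the paper's. The paper uses the finite $\epsilon$-net criterion for GH convergence: since the vertices of $X_n$ and $Y_n$ already form $O(1/n)$-nets of $\M_n$ and $\N$ respectively (this is built into the construction, \propref{prop:triangulation} and \secref{sec:mapping}), and since $F_n$ restricted to the vertices is a bijection between these nets, the distortion bound $\dis F_n\to 0$ from \propref{prop:disF_n} (which only decreases under restriction) immediately gives GH convergence. No separate density argument is needed.

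Your approach instead verifies density of $F_n(\M_n)$ in $\N$ explicitly and then uses the correspondence criterion. This is perfectly valid and has the merit of making the covering geometry explicit, but it is longer. One small point: your ``$5/n$'' bound for the boundary strip is stated a bit loosely---knowing that $\N\setminus\N_n\subset\Ball(\partial\N,3/n)$ and $\partial\N_n\subset\Ball(\partial\N,2/n)$ does not by itself bound the distance from a point of $\N\setminus\N_n$ to $\partial\N_n$; you also need that every point of $\partial\N$ is within $O(1/n)$ of $\partial\N_n$, which follows from the inclusion $\N_n\supset\N\setminus\Ball(\partial\N,3/n)$ via a short normal-segment argument. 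With that clarification your argument goes through.
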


\begin{proof}
The GH distance is a measure of distortions between metric spaces, and is a metric on isometry classes of compact metric spaces \cite[Chapter~10]{Pet06}. A sufficient and necessary condition for a sequence of metric spaces $(Z_n,d_n)$ to converge in the GH sense to a metric space $(Z,d)$ is that there exist bijections
\[
T_n: A_n\to B_n,
\]
where $A_n\subset Z_n$ and $B_n\subset Z$ are finite $\delta_n$-nets, $\delta_n\to0$, and the distortion of $T_n$,
\[
\dis T_n = \max_{x,y\in A_n} |d_n(x,y) - d( T_n(x), T_n(y))|
\]
tends to zero.

In our case, restrict $F_n$ to the vertices of the skeleton $X_n$. This forms a bijection between two finite $O(n^{-1})$-nets of $\M_n$ and $\N$ respectively. Since the distortion of a mapping does not grow under restriction, GH convergence is implied immediately by \propref{prop:disF_n}.
\end{proof}

Note that alternative mappings between the vertices of $X_n$ and $Y_n$ can be used to prove GH convergence. In particular, one can use an analogue of the construction shown in Appendix~A of \cite{KM15} to obtain  a $O(1/n)$ distortion, i.e., a higher rate of convergence.


\section{Convergence of \Wz\ manifolds}
\label{sec:connection}

\propref{prop:Fn}, \corrref{corr:rigidity} and \propref{prop:disF_n} cover the asymptotic surjectivity, mean asymptotic rigidity and vanishing distortion properties in \defref{def:convergence}. Therefore, to complete the proof of \thmref{th:main}, it remains to prove weak convergence of the connections, and extend the entire analysis to non simply-connected domains.

\begin{proposition}
\label{prop:connection}
Let $E$ be a fixed $\nabla$-parallel orthonormal frame field on $\N$. 
Then there exist $\nabla_n$-parallel orthonormal frame fields $E_n$ on $\M_n$ such that for every $p\in[1,\infty)$,
\beq
\int_{F_n(\M_n)} |(F_n)_\star E^n - E|^p\,\Volume = O\brk{\frac{1}{n^{1-\ep}}}.
\label{eq:connection}
\eeq
\end{proposition}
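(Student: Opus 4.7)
Since $\nabla_n$ is flat and its parallel transport is path-independent on $\M_n$ (\secref{sec:Mn}), there is a globally-defined $\nabla_n$-parallel orthonormal frame $E_n$ on $\M_n$, unique up to a constant orthogonal transformation. I fix it by imposing $(dF_n)_{v_0}E_n(v_0)=E(F_n(v_0))$ at an arbitrary base vertex $v_0$ of the triangulation, which is possible because $(dF_n)_{v_0}$ is an isometry (\propref{prop:Fn}). The proposition will follow from two estimates: (a) the matching $(dF_n)_vE_n(v)=E(F_n(v))$ holds \emph{exactly} at every vertex $v$ of the triangulation; (b) $|(F_n)_\star E_n-E|_\g=O(1/n)$ pointwise on $F_n(\M_n\setminus K_n)$. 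Combined with the uniform boundedness of $dF_n$ and $dF_n^{-1}$ and the volume bound $\textVol(K_n)=O(n^{-(1-\ep)})$ (\propref{prop:Fn}), these yield
\[
\int_{F_n(\M_n)} |(F_n)_\star E_n-E|^p_\g\,\Volume \;\le\; Cn^{-p}\textVol(\N)+Cn^{-(1-\ep)} \;=\; O(n^{-(1-\ep)}),
\]
since $p\ge 1>1-\ep$.

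The matching (a) propagates along edges of the triangulation. Let $u,v$ be adjacent vertices in some triangle $\M_{n,i}$, joined by the Euclidean edge $e$, and let $\gamma:=F_n(e)$ be the corresponding $\nabla$-geodesic in $\N$, with velocities $w_0=\dot\gamma(0)$ and $w_\ell=\dot\gamma(\ell)$. By the construction of $F_n$ in \propref{prop:Fn}, the vertex derivative $I_q:=(dF_n)_q$ ($q\in\{u,v\}$) is the orientation-preserving linear isometry $T_q\M_n\to T_{F_n(q)}\N$ that sends the initial Euclidean direction of each edge at $q$ to the initial velocity of the corresponding $\nabla$-geodesic edge at $F_n(q)$. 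Letting $\widehat{e}$ be the unit Euclidean direction from $u$ to $v$, one has $I_u(\widehat{e})=w_0$ and $I_v(\widehat{e})=w_\ell$ (the latter using that the direction from $v$ to $u$ is $-\widehat{e}$, which $I_v$ maps to $-w_\ell$, noting that the reverse of a $\nabla$-geodesic is still a $\nabla$-geodesic since $\nabla_{-X}(-X)=\nabla_XX$). Since $\nabla$-parallel transport along $\gamma$ sends $w_0\mapsto w_\ell$ and preserves the metric and orientation, a check on the basis $\{\widehat{e},\widehat{e}^\perp\}$ yields the intertwining
\[
I_v\circ P^{\text{Eucl}}_e \;=\; P^\nabla_\gamma\circ I_u \qquad\text{as maps } T_u\M_n\to T_{F_n(v)}\N.
\]
Applying this to $E_n(u)$, together with Euclidean-parallelism of $E_n$ along $e$, $\nabla$-parallelism of $E$, and the induction hypothesis at $u$, gives $I_vE_n(v)=E(F_n(v))$. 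Consistency at vertices common to several triangles is automatic since the angle sums equal $2\pi$ on both sides (\corrref{cor:triangle}, \propref{prop:triangulation}), so $I_q$ is independent of the adjacent triangle from which it is computed.

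For (b), fix $p\in\M_{n,i}\setminus K_{n,i}$ in the connected component of a vertex $q$. By \propref{prop:Fn}, $F_n(p)=\exp^\nabla_{F_n(q)}(v)$ with $v:=I_q(p-q)$, $|v|=O(1/n)$. Since $E_n$ is constant in the Euclidean chart of $\M_{n,i}$, applying (a) at $q$ gives
\[
(F_n)_\star E_n\big|_{F_n(p)} \;=\; (d\exp^\nabla_{F_n(q)})_v\,I_qE_n(q) \;=\; (d\exp^\nabla_{F_n(q)})_v\,E(F_n(q)),
\]
while $E(F_n(p))=P^\nabla_\gamma E(F_n(q))$ is the $\nabla$-parallel transport of $E(F_n(q))$ along the radial $\nabla$-geodesic from $F_n(q)$ to $F_n(p)$. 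A Jacobi-field computation for the flat connection $\nabla$ with torsion tensor $T$ yields, in a $\nabla$-parallel frame along this geodesic,
\[
(d\exp^\nabla_{F_n(q)})_v(X) \;=\; X+\tfrac{1}{2}T(v,X)+O(|v|^2),
\]
the leading torsion term coming from $\nabla_t^2J|_0=T(v,\nabla_tJ|_0)$ in the Jacobi equation with vanishing curvature. Hence $|(F_n)_\star E_n-E|_\g=O(|v|)=O(1/n)$ pointwise on $F_n(\M_n\setminus K_n)$, with the bound on $F_n(K_n)$ being uniform (since both frames are unit and $dF_n,dF_n^{-1}$ are bounded), which closes the estimate.

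The main obstacle is (a): verifying the intertwining $I_v\circ P^{\text{Eucl}}_e=P^\nabla_\gamma\circ I_u$ requires careful bookkeeping of edge directions and orientations, and crucially depends on the fact that $I_q$ is uniquely determined by the (matched) angle structure at $q$, which in turn hinges on the precise construction of defective Euclidean triangles in \propref{prop:edge_dis} and the angle-sum property of \propref{prop:triangulation}. The Jacobi calculation in (b) is classical but requires care when $\nabla$ has torsion, since it is the torsion contribution that produces the leading $O(|v|)$ discrepancy between $d\exp^\nabla$ and $\nabla$-parallel transport.
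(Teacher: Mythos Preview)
Your proof is correct and follows essentially the same strategy as the paper: fix the $\nabla_n$-parallel frame by matching it to $E$ at one vertex, propagate the exact matching along edges using that parallel transport along a geodesic of a metric connection preserves angles with the tangent (your intertwining $I_v\circ P^{\text{Eucl}}_e=P^\nabla_\gamma\circ I_u$ is the linear-algebraic phrasing of the paper's angle argument), and then bound the discrepancy on $\M_n\setminus K_n$ by $O(1/n)$, absorbing $K_n$ by its $O(n^{-(1-\ep)})$ volume.

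The only genuine difference is in step (b): the paper does not redo a Jacobi computation, but simply invokes the estimate $\dist(dF_n,\SO{\g_n,\g})=O(1/n)$ already established in \propref{prop:Fn}, together with the observation that $dF_n$ maps the unit tangent of the radial segment in $\M_{n,i}$ exactly to the unit tangent of the radial $\nabla$-geodesic in $\N_{n,i}$; since $(F_n)_\star E_n$ and $E$ make the \emph{same} angle with that common tangent direction (by the edge-propagation argument applied now to the radial geodesic), the $O(1/n)$ defect of $dF_n$ from an isometry is all that separates them. Your Jacobi-field expansion $(d\exp^\nabla_q)_v(X)=X+\tfrac12 T(v,X)+O(|v|^2)$ reproves this estimate from scratch and makes the torsion contribution explicit; it is a valid and slightly more self-contained alternative, at the cost of redoing work already packaged in \propref{prop:Fn}.
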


\begin{proof}
Let $E = \{e_1,e_2\}$ be a fixed $\nabla$-parallel orthonormal frame field on $\N$. Without loss of generality, assume $E$ is oriented. 
Given $n\in\mathbb{N}$ we construct an oriented $\nabla_n$-orthonormal parallel frame field $E^n = \{e^n_1,e^n_2\}$. For that, we only need to specify $e^n_1$ at a single point. 
Take a point $p\in Y_n$, and let $q\in Y_n$ be another point that lies on the same edge of a triangle. There exists a unique unit vector $v_p$ in $T_p\N$ such that $q=\exp^\nabla_{p}(t v_p)$ for some $t\in(0,\Lmax/n]$. 
Likewise, there exists a unique unit vector $w_{F_n^{-1}(p)}$ in $T_{F^{-1}_n(p)}\M_n$, such that
$F^{-1}_n(q) = F_n^{-1}(p) + t\, w_{F_n^{-1}(p)}$ for some $t\in(0,\Lmax/n]$.
$e^n_1$ is the unique unit vector that satisfies 
\[
\g_n(e^n_1,w_{F^{-1}_n(p)}) = \g(e_1,v_p).
\]

We claim that this choice implies that the above identity holds for every point $p'\in Y_n$, because parallel transport along a geodesic of a metric connection preserves both the length of the vector and its angle with the geodesic (recall that $X_n$ and $Y_n$ are unions of $\nabla_n$- and $\nabla$-geodesics, respectively).

Let now $p\in\M_n\setminus K_n$ and denote by $q_p$ the vertex of $X_n$ in its connected component. Denote by $\gamma_p$ the $\nabla_n$-geodesic connecting $q_p$ and $p$ and let $\sigma_p=F_n(\gamma_p)$ be  the $\nabla$-geodesic connecting $F_n(q_p)\in Y_n$ and $F_n(p)$. By the same argument about parallel transport along geodesics, the angle between $e^n_i$ and $\gamma_p$ at $p$ equals the angle between $e_i$ and $\sigma_p$ at $F_n(p)$. Since 
\[
\dist(dF_n,\SO{\g_n,\g}) = O\brk{\frac{1}{n}}
\qquad 
\text{ on $\M_n\setminus K_n$},
\]
and since $dF_n$ maps the unit tangent of $\gamma_p$  to the unit tangent of  $\sigma_p$,
it follows that 
\[
|(F_n)_\star E^n - E| = O\brk{\frac{1}{n}}
\qquad 
\text{ on $F_n(\M_n\setminus K_n)$}.
\]
Moreover, $dF_n$ is uniformly bounded hence so is $|(F_n)_\star E^n - E|$. Since $\textVol(K_n) = O(n^{-1+\ep})$, \eqref{eq:connection} follows.
\end{proof}

Finally, to extend the entire analysis to domains of finite genus, one has to partition the domain along geodesics into simply-connected components, approximate each domain separately up to the lines of partitions (which is why they need to be geodesics), and glue the defective components together.


\paragraph{Acknowledgements}
We are grateful to Jake Solomon, Pavel Giterman and Michael Moshe for valuable discussions and to Alon Kupferman for help with the figures.
This work was partially funded by the Israel Science Foundation and by the Israel-US Binational Foundation.

\appendix
\section{Riemannian and connection curvatures in two dimensions}
\label{sec:curlfree}

In this appendix we prove \propref{prop:curlfree}:

\begin{quote}
{\itshape
Let $(\N,\g)$ be a two-dimensional Riemannian manifold. Let $\nabla$ be a metrically-consistent connection defined as in \eqref{eq:V} by a vector field $V$. Let $K$ and $K^\g$ denote the connection and the Riemannian gaussian curvatures. Then,
\[
K \,\revised{\Volume} = K^\g \,\revised{\Volume} - d \star V^\flat.
\]
}
\end{quote}

A tedious, but straightforward calculation shows that
\[
\begin{split}
R(X,Y)Z &= R^\g(X,Y) Z \\
&+  \g(Y,Z) \nabla^\g_X V -  \g(X,Z) \nabla^\g_Y V \\
&-   \g(\nabla^\g_X V,Z) Y +   \g(\nabla^\g_Y V,Z) X \\
&+ \g(Y,Z) \g(X,V)V   - \g(X,Z) \g(Y,V)V \\
&- \g(Y,Z) |V|^2 X + \g(V,Z) \g(V,Y)  X + \g(X,Z) |V|^2 Y - \g(V,Z) \g(V,X)  Y.
\end{split}
\]
In two dimensions, for an orthonormal frame $(e_1,e_2)$,
\[
\begin{split}
\g(R(e_1,e_2)e_1,e_2) &= \g(R^\g(e_1,e_2) e_1,e_2) 
 -  \g(\nabla^\g_{e_2} V,e_2) -   \g(\nabla^\g_{e_1} V,e_1),
\end{split}
\]
that is,
\[
\begin{split}
K &= K^\g
 -  \g(\nabla^\g_{e_2} V,e_2) -   \g(\nabla^\g_{e_1} V,e_1),
\end{split}
\]

Write $V= fe_1 + ge_2$ for some functions $f,g$, and let $(\cof^1,\cof^2)$ be the co-frame of $(e_1,e_2)$. Then
\[
\begin{split}
\g(\nabla^\g_{e_1} V,e_1) +   \g(\nabla^\g_{e_2} V,e_2) 
	&= \cof^1(\nabla^\g_{e_1}V) + \cof^2(\nabla^\g_{e_2} V) \\
	&= \cof^1(\nabla^\g_{e_1}(fe_1 + ge_2)) + \cof^2(\nabla^\g_{e_2}( fe_1 + ge_2)) \\
	&= df(e_1) + g\,\cof^1(\nabla^\g_{e_1} e_2) + f\,\cof^2(\nabla^\g_{e_2}e_1) + dg(e_2) \\
	&= df(e_1) + g \,\omega^1_2(e_1) + f \,\omega^2_1(e_2) + dg(e_2),
\end{split}
\]
where $\omega^1_2= -\omega^1_2$ is the 1-form defining the connection.
On the other hand we have $V^\flat= f\cof^1 + g\cof^2$, hence $\star V^\flat = f\cof^2 - g\cof^1$,  and therefore
\[
d\star V^\flat = df \wedge \cof^2 - dg\wedge \cof^1 + f\,d\cof^2  - g\,d\cof^1.
\]
Using Cartan's first structural equation we obtain
\[
d\star V^\flat(e_1,e_2) = df(e_1) + dg(e_2) + f\,\omega^2_1(e_2) + g\,\omega^1_2(e_1) = \g(\nabla^\g_{e_1} V,e_1) +  \g(\nabla^\g_{e_2} V,e_2),
\]
i.e.,
\[
K \,\revised{\Volume} = K^\g \,\revised{\Volume} - d \star V^\flat.
\]

\section{Gauss-Bonnet theorem for metric connections}
\label{sec:GB}

\begin{theorem}
Let $\nabla$ be a metric connection on a two-dimensional oriented Riemannian manifold $(\M,\g)$. Let $\ga$ be a closed piecewise-smooth closed curve encircling a domain $\Gamma$. Denote by $\{t_i\}_{i=1}^k$ the vertices of $\ga$ and by $\e_i$ the exterior angles. Then,
\[
\sum_{i=1}^k \int_{t_{i-1}}^{t_i} k_\nabla\, dt + \sum_{i=1}^k \e_i = 2\pi -  \int_\Gamma K\,\Vol,
\]
where $t_0=t_k$, $k_\nabla$ is the signed geodesic curvature of $\ga$ with respect to $\nabla$,  $K$ is the gaussian curvature of $\nabla$, and $\Vol$ is the area form. In particular, if $\ga$ is piecewise geodesic and $\nabla$ is flat, then the sum of the exterior angles is $2\pi$.
\end{theorem}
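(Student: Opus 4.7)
The strategy is to reduce the statement to the classical Gauss--Bonnet theorem for the Levi--Civita connection $\nabla^\g$, and bridge the gap between $k_\nabla$ and $k_\g$ using the algebraic relation \eqref{eq:V} between $\nabla$ and $\nabla^\g$, together with the curvature identity of \propref{prop:curlfree}.

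First, I would derive a pointwise formula relating $k_\nabla$ and $k_\g$ along each smooth arc of $\gamma$. Let $T$ denote the unit tangent to $\gamma$ and $N$ the unit normal chosen so that $(T,N)$ is positively oriented. Applying \eqref{eq:V} with $X=Y=T$ gives
\[
\nabla_T T = \nabla^\g_T T + V - \g(V,T)T.
\]
Since $V - \g(V,T)T$ is the component of $V$ normal to $\gamma$, it equals $\g(V,N)N$, and so $k_\nabla = k_\g + \g(V,N)$ along each smooth arc. Note also that the exterior angles $\e_i$ depend only on $\g$ (they are angles between tangent vectors), so they are the same whether one works with $\nabla$ or $\nabla^\g$.

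Next, I would integrate this relation over $\gamma$ and convert the correction term into an integral over $\Gamma$. Expanding $V$ in an orthonormal frame $(e_1,e_2)$ and decomposing $T$ in the same frame, one checks that $\g(V,N)\,ds$ coincides (up to a sign fixed by the orientation convention) with the pullback to $\gamma$ of the $1$-form $\star V^\flat = \iota_V \Vol$, computed exactly as in Appendix~\ref{sec:curlfree}. Hence, by Stokes' theorem,
\[
\int_\gamma \g(V,N)\,ds = \pm \int_\gamma \star V^\flat = \pm \int_\Gamma d\!\star\! V^\flat.
\]
Now I would invoke \propref{prop:curlfree}, which identifies $d\!\star\! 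V^\flat$ with $(K^\g - K)\,\Vol$, to rewrite the correction as $\pm \int_\Gamma (K^\g - K)\,\Vol$.

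Finally, I would substitute everything into the classical Gauss--Bonnet theorem for $\nabla^\g$,
\[
\sum_{i=1}^k \int_{t_{i-1}}^{t_i} k_\g\, dt + \sum_{i=1}^k \e_i = 2\pi - \int_\Gamma K^\g\,\Vol,
\]
and the $K^\g$ terms should cancel, yielding the desired identity $\sum_i\int k_\nabla\,dt + \sum_i \e_i = 2\pi - \int_\Gamma K\,\Vol$. The concluding statement about piecewise $\nabla$-geodesic curves follows immediately, since then $k_\nabla\equiv 0$ and $K\equiv 0$, so the formula collapses to $\sum \e_i = 2\pi$.

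The main obstacle is bookkeeping of signs and orientations: one must reconcile the orientation convention used to define $N$ (and hence the sign of $k_\nabla - k_\g$), the sign convention in the definition of $\star$ used in Appendix~\ref{sec:curlfree}, and the orientation induced on $\gamma=\partial\Gamma$ by Stokes' theorem. All of this is mechanical once the conventions are fixed, but a careful consistency check is needed to ensure that the two $K^\g$ terms cancel with the correct sign and that only $K$ (with the sign dictated by \propref{prop:curlfree}) remains on the right-hand side.
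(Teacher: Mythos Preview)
Your proposal is correct and takes a genuinely different route from the paper. The paper proves the result directly by a moving-frames argument: it introduces an angle function $\theta(t)$ between $\dot\gamma$ and a local orthonormal frame, invokes the rotation-index theorem to obtain $\sum_i\e_i + \sum_i\int\dot\theta\,dt = 2\pi$, computes $k_\nabla$ in terms of $\dot\theta$ and the connection $1$-form $\omega$ of $\nabla$ itself, and then applies Stokes with $d\omega = K\,\Vol$. In particular, the paper never passes through the Levi--Civita connection or \propref{prop:curlfree}; it simply reruns the classical proof with $\nabla$ in place of $\nabla^\g$.

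Your reduction to the classical Gauss--Bonnet theorem via $k_\nabla = k_\g + \g(V,N)$ together with \propref{prop:curlfree} is equally valid and arguably more economical given what is already available in the paper: it recycles two established results rather than redoing the moving-frames computation. The paper's approach, by contrast, is more self-contained and makes it transparent that the statement holds for an arbitrary metric connection in dimension two without reference to the decomposition \eqref{eq:V} (though of course every two-dimensional metric connection is of that form). Your caveat about sign bookkeeping is apt: with the conventions of Appendix~\ref{sec:curlfree} one has $\g(V,N)\,ds = -\gamma^*(\star V^\flat)$ for the inward normal $N$, and this sign must be matched consistently with the one in \propref{prop:curlfree} for the two $K^\g$ contributions to cancel.
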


\begin{proof}
The proof is basically an adaptation of the proof in Lee \cite[p. 164]{Lee97}.
Let $(e_1,e_2)$ be a local oriented orthonormal frame. Denote by $\theta(t)$ the unique angle between $\dot{\ga}(t)$ and $e_1(\ga(t))$, with $\theta(t_0)\in (-\pi,\pi]$, which is continuous at the smooth parts of $\gamma$ and has jumps $\e_i$ at the vertices.

By the fundamental theorem of calculus we have
\begin{align}
\label{eq:umlaufsatz}
\sum_i \e_i + \sum_i \int_{t_{i-1}}^{t_i} \dot\theta(t)\,dt = \theta(b)-\theta(a) = 2\pi
\end{align}
At all smooth points of $\gamma$ we have
\beq
\begin{aligned}
\label{eq:gammadot}
\dot\gamma(t) &= \cos \theta(t) e_1(t) + \sin \theta(t) e_2(t) \\
N(t) &= -\sin\theta(t) e_1(t) + \cos\theta(t) e_2(t).
\end{aligned}
\eeq

Differentiating \eqref{eq:gammadot} with respect to $\nabla_{\dga}$ we obtain
\[
\begin{split}
\frac{D\dot\gamma}{dt} &= 
-\sin\theta(t) \dot\theta(t)e_1 + \cos\theta(t)(-\omega(\dot\gamma(t))e_2) +
\cos\theta(t) \dot\theta(t) e_2 + \sin\theta(t)\omega(\dot\gamma(t))e_1 \\
&= \dot\theta(t)N(t) + \omega(\dot\gamma(t))(-\cos\theta(t)e_2 + \sin\theta(t)e_1),
\end{split}
\]
where $\omega$ is the one-form defined by $\nabla_X e_1 = \omega(X) e_2$ (and by the metricity of the connection $\nabla_X e_2 = -\omega(X) e_1$).

By taking an inner-product with $N(t)$, we obtain
\begin{align}
\label{eq:kappaN}
k_\nabla(t) &=  \left\langle \frac{D\dot{\gamma}(t)}{dt}, N(t) \right\rangle_{\gamma(t)}
		= \dot\theta(t) - \omega(\dot\gamma(t)).
\end{align}

Inserting \eqref{eq:kappaN} into \eqref{eq:umlaufsatz}, and using Stokes's theorem we obtain
\begin{align}
2\pi = \sum_{i=1}^k \e_i + \sum_{i=1}^k \int_{t_{i-1}}^{t_i} k_\nabla\,dt +  \sum_{i=1}^k \int_{t_{i-1}}^{t_i} \omega = \sum_{i=1}^k \e_i + \sum_{i=1}^k \int_{t_{i-1}}^{t_i} k_\nabla\,dt +  \int_\Gamma d\omega. \nonumber
\end{align}
In two-dimensions $ d\omega = K\,\Vol$, which completes the proof. 
\end{proof}

\section{Proof of \propref{prop:tedious}}
\label{sec:tedious}

In this section we prove a general result concerning short curves in two-dimensional Riemannian manifolds, possibly endowed with a non-symmetric connection: 

\begin{quote}
{\itshape
Let $\gamma:[0,\ell]\to\N$, $\ell \le 3\Lmax/n$, be a curve in arclength parametrization with geodesic curvature $k(t)$ (the geodesic curvature is with respect to the connection $\nabla$). Let $\sigma:[0,\ell]\to\R^2$ be a curve in arclength parametrization with geodesic curvature $k(t)$ (with respect to the Euclidean connection). Then, there exists a constant $\Delta>0$, which only depends on $\Lmax$, $\Kmax$, $\Kmin$ and $\Lambda$, such that for every $t\in[0,\ell]$,
\[
|d(\ga(t),\ga(0)) - |\sigma(t) - \sigma(0)|| \le \frac{\Delta}{n^2}.
\]
}
\end{quote}

\begin{proof}
We represent the curve $\ga$ using
a semi-geodesic parametrization $(r,\theta)$ as in the proof of
\propref{prop:geo_length}, where the origin is the starting point of $\ga$; thus
\[
r(t) = d(\ga(t),\ga(0)).
\]
For a metric  prescribed by a function $\vp(r,\theta)$, with a connection prescribed  by a vector field $(V^r,V^\theta)$, the parametric equations of a curve 
with geodesic curvature $k = k(t)$ are
\[
\begin{gathered}
\deriv{}{t} \dot{r} = \frac{\vp_r}{\vp} (\vp\dot{\theta})^2 -\brk{V^r \vp\dot{\theta} + \vp V^\theta \dot{r}}\vp\dot{\theta}
-  k\, \vp\dot{\theta} \\
\deriv{}{t}(\vp \dot{\theta}) =  
-\frac{\vp_r}{\vp}\dot{r}\, \vp\dot{\theta} + \brk{V^r \vp\dot{\theta} + \vp V^\theta \dot{r}} \dot{r} + k\, \dot{r}.
\end{gathered}
\]
Using the fact that $(\vp\dot{\theta})^2 = 1 - \dot{r}^2$ we rewrite it as follows:
\beq
\begin{gathered}
\deriv{}{t} (r \dot{r}) +  (k + \bar{\Delta}) \, r \vp\dot{\theta} =  1  \\
\deriv{}{t}(r \vp \dot{\theta}) - (k +\bar{\Delta})\, r \dot{r} =  0,
\end{gathered}
\label{eq:sys_tedious}
\eeq
where 
\[
\bar{\Delta} = \brk{V^r \vp\dot{\theta} + \vp V^\theta \dot{r}} - \brk{\frac{\vp_r}{\vp} - \frac{1}{r}} \vp\dot{\theta} \equiv \Delta_1 + \Delta_2.
\]
We have the following estimates:
\[
\begin{aligned}
|\Delta_1| &\le \Lambda \\
|\Delta_2| &\le \sup \Abs{\frac{\vp_r r - \vp}{r\vp}} = \sup  \Abs{\frac{\vp_{rr} r}{\vp + r\vp_r}}
\le \frac{3\Lmax}{n} \max(|\Kmin|,|\Kmax|).
\end{aligned}
\]

Defining $z = r(\dot{r} + \imath\,  \vp\dot{\theta})$, \eqref{eq:sys_tedious} takes the complex form,
\[
\dot{z} - \imath (k+\bar{\Delta}) z =1.
\]
Integrating, we obtain
\[
\begin{aligned}
(r\dot{r})(t) &= 
 \int_0^t \cos(\xi(t) - \xi(s))\, ds,
\end{aligned}
\]
where 
\[
\xi(t) =\xi_0(t)  + \xi_1(t),
\] 
with
\[
\xi_0(t) = \int_0^t k(s)\,ds 
\Textand
\xi_1(t) = \int_0^t \bar{\Delta}(s)\,ds = O(n^{-1}).
\]
Integrating a second time
\[
r^2(t) = 2 \int_0^t \int_0^s \cos(\xi(s) - \xi(s'))\, ds' ds = \int_0^t \int_0^t \cos(\xi(s) - \xi(s'))\, ds' ds,
\]
where the second equality follows by symmetry. Using the formula for the cosine of a difference, we get 
\[
r^2(t) =  \brk{\int_0^t\cos \xi(s)\,ds}^2 + \brk{\int_0^t\sin \xi(s)\,ds}^2.
\]
The Euclidean case is retrieved by setting $\xi_1=0$, namely,
\[
r^2_E(t) = |\sigma(t) - \sigma(0)| = \brk{\int_0^t\cos \xi_0(s)\,ds}^2 + \brk{\int_0^t\sin \xi_0(s)\,ds}^2.
\] 

Substituting $\xi = \xi_0 + \xi_1$, it takes simple manipulations to get
\[
\begin{split}
r^2(t) &= r_E^2(t) + 4\brk{\int_0^t \sin \frac{\xi_1}{2} \sin\brk{\frac{\xi_1}{2} - \xi_0}\, ds}^2 
+ 4\brk{\int_0^t \sin \frac{\xi_1}{2} \cos\brk{\frac{\xi_1}{2} - \xi_0}\, ds}^2 \\
&- 2\brk{\int_0^t\cos \xi_0(s)\,ds} \brk{\int_0^t \sin \frac{\xi_1}{2} \sin\brk{\frac{\xi_1}{2} - \xi_0}\, ds}
\\
&+ 2\brk{\int_0^t\sin \xi_0(s)\,ds} \brk{\int_0^t \sin \frac{\xi_1}{2} \cos\brk{\frac{\xi_1}{2} - \xi_0}\, ds}.
\end{split}
\]
Since
\[
\Abs{\int_0^t \cos \xi_0\, ds}, \Abs{\int_0^t \sin \xi_0\, ds} \le r_E(t),
\]
and both $\xi_1$ and $t$ are $O(1/n)$, it follows that
\[
\Abs{r^2(t) - r_E^2(t)} \le \frac{\Delta}{n^2}r_E + O\brk{\frac{1}{n^4}},
\]
which implies
\[
\Abs{r(t) - r_E(t)} \le \frac{\Delta}{n^2}.
\]
\end{proof}


\bibliographystyle{amsalpha}

\begin{thebibliography}{KMS14}

\bibitem[AT04]{AT04}
Ilka Agricola and Christian Thier, \emph{The geodesics of metric connections
  with vectorial torsion}, Ann. Global Anal. Geom. \textbf{26} (2004),
  321--332.

\bibitem[BBS55]{BBS55}
B.A. Bilby, R.~Bullough, and E.~Smith, \emph{Continuous distributions of
  dislocations: A new application of the methods of {Non-Riemannian} geometry},
  Proc. Roy. Soc. A \textbf{231} (1955), 263--273.

\bibitem[Ber02]{Ber02}
Marcel Berger, \emph{A panoramic view of riemannian geometry}, Springer, 2002.

\bibitem[Dek80]{Dek80}
B.~V. Dekster, \emph{The length of a curve in a space of curvature $\leqslant
  k$}, Proceedings of the American Mathematical Society \textbf{79} (1980),
  no.~2, 271--278.

\bibitem[Kli95]{Kli95}
Wilhelm P.~A. Klingenberg, \emph{Riemannian geometry}, 2nd ed., Walter de
  Gruyter, 1995.

\bibitem[KM15]{KM15}
Raz Kupferman and Cy~Maor, \emph{The emergence of torsion in the continuum
  limit of distributed dislocations}, To appear in Journal of Geometric
  Mechanics.

\bibitem[KMS14]{KMS14}
Raz Kupferman, Michael Moshe, and Jake~P. Solomon, \emph{Metric description of
  defects in amorphous materials}, Submitted to Arch. Rat. Mech. Anal., 2014.

\bibitem[Kr\"o81]{Kro81}
E.~Kr\"oner, \emph{Contiuum theory of defects}, Physics of Defects -- Les Houches
  Summer School Proceedings (Amsterdam) (R.~Balian, M.~Kleman, and J.-P.
  Poirier, eds.), North-Holland, 1981.

\bibitem[Lee97]{Lee97}
John~M. Lee, \emph{Riemannian manifolds - an introduction to curvature},
  Graduate Texts in Mathematics, Springer, 1997.

\bibitem[Pet06]{Pet06}
Peter Petersen, \emph{Riemannian geometry}, 2nd ed., Graduate Texts in
  Mathematics, Springer, 2006.

\end{thebibliography}

\providecommand{\bysame}{\leavevmode\hbox to3em{\hrulefill}\thinspace}
\providecommand{\MR}{\relax\ifhmode\unskip\space\fi MR }
\providecommand{\MRhref}[2]{%
  \href{http://www.ams.org/mathscinet-getitem?mr=#1}{#2}
}
\providecommand{\href}[2]{#2}

\end{document}